\documentclass[a4paper,12pt]{amsart}

\usepackage{amssymb, amsmath, amsthm, amsfonts}
\usepackage{extsizes}
\usepackage{hyperref}
\usepackage{xcolor,graphicx}
\usepackage{hyperref}
\usepackage{graphicx,color} 

\usepackage{url} 

\newtheorem{theorem}{Theorem}[section]
\newtheorem{corollary}[theorem]{Corollary}
\newtheorem{proposition}[theorem]{Proposition}
\newtheorem{lemma}[theorem]{Lemma}

\theoremstyle{definition}
\newtheorem{definition}[theorem]{Definition}
\newtheorem{example}[theorem]{Example}
\newtheorem{remark}[theorem]{Remark}

\def\r{\mathbb R}
\def\l{\mathbb L}
\def\h{\mathbb H}
\def\m{\mathbb M}

\def\R{\mathcal{ R}}

\def\s{\mathbb S}

\begin{document}

\title[Newton's minimal resistance problem on Riemannian surfaces]{Generalization of Newton's minimal resistance problem to Riemannian surfaces}
\author{Rafael L\'opez}
\address{Department of Geometry and Topology. University of Granada.   18071 Granada, Spain}
\email{rcamino@ugr.es}
\keywords{Newton's minimal resistance problem, Riemannian surfaces, space forms, loxodromes, global minimization.}
 \subjclass[2020]{Primary 49Q10; Secondary 49K05, 49J05, 53Z05} 
\begin{abstract}
We extend Newton's problem of minimal resistance to Riemannian surfaces endowed with a geodesic     coordinate system, which includes the two-dimensional space forms such as the sphere  and the hyperbolic plane. Assuming that the fluid particles flow along radial geodesics, we derive the resistance functional and prove that its smooth extremals are the loxodromes of the surface. Furthermore, we analyze the constrained minimization problem, establishing the absence of strong local minima for smooth extremals, and   characterizing their global minimizers.
 \end{abstract}

\maketitle

\section{Introduction and formulation of Newton's problem}

The problem of minimal resistance was proposed by Newton in his {\it Principia Mathematica}. Roughly speaking, the  problem consists of finding the shape of a solid body in Euclidean space $\r^3$ that offers minimal resistance while moving with constant velocity through a homogeneous fluid flowing parallel to a fixed direction. The literature on this problem is extensive. From a mathematical viewpoint, we refer the reader to the surveys \cite{bu,bk} and references therein. Different many assumptions have been considered: types of bodies depending on convexity or symmetry \cite{bfk,bfk2,ch1,ch2}, types of collision \cite{ch1,py1,py2} and change on the physical conditions \cite{hkv,lo1,lo2}

In the two-dimensional version of the problem,   Plakhov, Silva and Torres investigated the problem when the ambient space is the Euclidean plane $\r^2$ and the direction of the fluid is parallel to a fixed direction  \cite{pt1,pt2,st,tp}. In $(x,y)$ coordinates of $\r^2$, if the   direction is   $(0,1)$, Newton's problem consists of finding minimizers of the functional:
\begin{equation}\label{eq0}
y\mapsto \int_0^{R}\frac{1}{1+y'(x)^2}\, dx,
\end{equation}
where $y\colon [0,R]\to \r$ is a piecewise $C^1$ function. Fixing $H>0$ and assuming boundary conditions $y(0) = 0$ and $y(R) = H$, it was proved that the problem has no global solution in general. However, if the admissible functions are restricted such that $y'(x) \geq 0$, then the problem has a unique solution $y(x) =\frac{H}{R}$ if $R < H$, and infinitely many solutions if $R\geq H$ \cite{st}.  
 
In this paper, we extend Newton's problem of minimal resistance to Riemannian surfaces 
$(\Sigma, ds^2)$ endowed with a geodesic     coordinate system $(u,v)$, where the metric 
takes the local form $ds^2 = du^2 + f(u)^2 dv^2$ and the parametric curves $v=\text{constant}$ are geodesics. A natural motivation for our study 
are the two-dimensional space forms $\mathbb{M}^2(c)$ with constant curvature 
$c \in \{1, 0, -1\}$, namely the sphere $\mathbb{S}^2$, the Euclidean plane $\mathbb{R}^2$, and the hyperbolic plane $\mathbb{H}^2$.

 The  difficulty in extending Newton's problem to curved spaces is the physical interpretation of a `moving body' and the definition of a `fixed direction'. To overcome this, we adopt the kinematically equivalent viewpoint that the object $\mathcal{B}$ is fixed within the surface, and the homogeneous fluid moves around it along the integral curves of a specific   flow.  For the Riemannian surface $(\Sigma,ds^2)$, we will assume that the particles of the fluid travel with velocity $\vec{v}_i$ along the integral curves of the vector field $-\partial_u$.  This distinguishes it from the classical problem in $\r^2$ where the particles move parallel to  a fixed direction of $\r^2$  \cite{st}.   Recently, the author generalized Newton's minimal resistance problem on $\r^3$ assuming that the fluid is radial \cite{lo2}. 

We assume that    $(\Sigma,ds^2)$ is a homogeneous medium of particles and that these particles move along  the vector field $-\partial_u$. Consider a two-dimensional object $\mathcal{B}\subset\Sigma$ whose boundary is a piecewise $C^1$-curve $\gamma$. Then, the particles of $\Sigma$ collide with $\mathcal{B}$ along its boundary $\gamma$ and we will assume that the particles, after the collision, do not hit against $\mathcal{B}$ again  (Single Impact Condition). The choice of $-\partial_u$ is justified by physical and geometric reasons. First,   the speed of the particles is constant because $|\partial_u|=1$ and this preserves the homogeneity of the fluid. Furthermore, the choice of $-\partial_u$ ensures that the particles follow geodesic trajectories of the ambient space $\Sigma$.   

The main result of this paper establishes a   classification of the global minimizers for  Newton's resistance problem, which can be summarized as follows.

\begin{quote}
 {\it Let  $0<u_0<u_1$ and $L=\int_{u_0}^{u_1}\frac{1}{f(u)}\, du$. Let $A=(u_0, v_0)$ and $B=(u_1, v_1)$ be two points  on $(\Sigma,ds^2)$  and let $\Delta v = v_1 - v_0$ be the angular amplitude between the two points. In the class of admissible piecewise monotonic $C^1$-curves  connecting $A$ and $B$, the following holds:

\begin{enumerate}
    \item  If $\Delta v \le L$, then the unique absolute minimizer is a (smooth) loxodrome connecting $A$ and $B$. 
        \item If $\Delta v > L$, then  the absolute minimum is attained by a truncated loxodrome, consisting of a loxodromic arc of exactly angular amplitude $L$ followed by a flat, constant-radius circular arc.
\end{enumerate}}
\end{quote}

The paper is organized as follows. In Section \ref{s2}, we derive the resistance functional. 
In Section \ref{s3}, we obtain the Euler-Lagrange equations and prove in Theorem \ref{t1}
 that the smooth extremals are the loxodromes of the surface, that is, curves that form a constant angle with the parametric curves $v=\text{constant}$. Moreover, we show that 
 for an extremal to be a local minimizer, its attack angle must be less than $\pi/3$ (Proposition \ref{pr37}), 
 a universal constant independent of the   curvature of the surface. In Section \ref{s4}, we investigate the constrained minimization problem and the concept of truncated loxodromes.  In Section \ref{s5}, we find  necessary and sufficient conditions for local minima, proving that loxodromes can provide weak local minima but cannot be strong local minima (Corollary \ref{co44}).   Finally, in Section \ref{s6}, we   solve the global minimization problem, demonstrating that the nature of the optimal profile depends   on the angular amplitude between the two points,   as previously  stated.  

\section{Derivation of the resistance functional}\label{s2}

Let $(\Sigma,ds^2)$ be a Riemannian surface and consider $\Psi\colon\Omega\subset\r^2\to \Sigma$ a parametrization of $\Sigma$, where $\Omega=I\times J$ is a rectangular domain of $\r^2$. Let $(u,v)$ denote the coordinates of $\Omega$. Throughout this paper, we will assume that the metric $ds^2$ can be locally written as 
\begin{equation}\label{ds}
ds^2=du^2+f(u)^2dv^2,
\end{equation}
where $f=f(u)$ is a smooth positive function. In a more general context, if $ds^2=du^2+G(u,v)^2dv^2$ and the parametric curves $v=\textrm{constant}$ 
are geodesics, the     coordinate system is called a geodesic coordinate system \cite[p. 136]{str}. This is the case for normal coordinates in a Riemannian surface centered at a fixed point and defined by radial geodesics starting from that point. In our framework, we have $G(u,v)=f(u)$, but we do not necessarily require  that the parametric curves $v=\text{constant}$ are geodesics of $\Sigma$. However, from a physical viewpoint, the fact that these curves are geodesics gives relevance and meaning to our model. For example, this is the situation in space forms, as we will see below.  To simplify the notation, the parametric curves 
$u=\text{constant}$ and $v=\text{constant}$ will be called parallels and meridians, respectively.

 From now on, we will identify $\Sigma$ with its parametrization (or immersion) $\Psi$. Throughout this paper, the points of the surface are defined by their (unique) geodesic coordinates $(u,v)$ instead of their images under $\Psi$. We point out that if we fix two points on $ \Psi(\Omega)$, there may exist many curves connecting the points. For example, in the space forms, the immersion $\Psi$ is $2\pi$-periodic in the coordinate $v$. In such a case, the point $B$ can be represented by the coordinates $(u_1, v_1 + 2\pi n)$ for any $n \in \mathbb{Z}$.

A first example of geodesic coordinates are the standard (latitude/longitude) parametrization of  a surface of revolution in $\r^3$. A second example is given by  the two-dimensional space forms $\m^2(c)$ with of curvature $c\in\{1,0,-1\}$, which we briefly recall. 

\begin{enumerate}
 \item Sphere $\s^2$ ($c=1$). Let $\s^2=\{(x,y,z)\in\r^3 \colon x^2+y^2+z^2=1\}$ be endowed with the Euclidean metric $\langle,\rangle$ from $\r^3$. We parametrize $\s^2$ by $\Psi\colon [-\frac{\pi}{2},\frac{\pi}{2}]\times \r\to\s^2$ given by 
 $$ \Psi(u,v) = (\cos u\cos v, \cos u\sin v, \sin u). $$
  
 \item Euclidean plane $\r^2$ ($c=0$). We consider polar coordinates 
 $$\Psi(u,v) = (u \cos v, u \sin v),$$
 where $(u,v)\in [0,\infty)\times [0,2\pi]$. 
 
 \item Hyperbolic plane $\h^2$ ($c=-1$). We use the hyperboloid model $\h^2 = \{(x,y,z) \in \r^{2,1} \colon x^2+y^2-z^2=-1, z>0\}$, whose metric is induced from the Lorentzian space $\r^{2,1}$, that is, $\langle,\rangle=dx^2+dy^2-dz^2$. We parametrize $\h^2$ by $\Psi\colon [0,\infty)\times\r\to\h^2$, where  
 $$\Psi(u,v)=(\sinh u\cos v,\sinh u\sin v,\cosh u).$$
 \end{enumerate}

In the space forms $\m^2(c)$, the function $f(u)$ of the metric \eqref{ds} is
\begin{equation}\label{ff}
f(u)=\left\{ \begin{array}{ll}
 \cos u & \quad \s^2,\\
 u &\quad\r^2,\\
 \sinh u &\quad\h^2.
 \end{array}\right.
 \end{equation}
 
 On $(\Sigma,ds^2)$, a basis for the tangent space is $\{\Psi_u, \Psi_v\}$. The coefficients of the first fundamental form are \begin{equation*}
\langle \Psi_u, \Psi_u \rangle = 1, \quad \langle \Psi_v, \Psi_v \rangle = f(u)^2, \quad \langle \Psi_u, \Psi_v \rangle = 0.
\end{equation*}
The trajectories of the particles are tangent to the meridians and their velocities are
\begin{equation*}
\vec{v}_i =-\partial_u= -\Psi_u.
\end{equation*}
 As stated previously, we assume that the fluid particles move along the meridians of $\Sigma$.   Notice that in $\m^2(c)$, these curves are geodesics flowing towards (or away from) the pole in $\s^2$ and $\h^2$ or the origin in $\r^2$. Assume that the boundary of $\mathcal{B}$ is a piecewise $C^1$-curve parametrized by $\gamma(t) = \Psi(u(t), v(t))$, where $u(t)$ and $v(t)$ are piecewise $C^1$ functions defined in some interval of $\r$. The tangent vector to the curve is $ \gamma'(t) = u' \Psi_u +v' \Psi_v$, where $u' = \frac{du}{dt}$ and $v' = \frac{dv}{dt}$. Thus, the unit tangent vector $T$ is
\begin{equation*}
T = \frac{1}{\sqrt{u'^2 +v'^2 f(u)^2}} (u' \Psi_u +v' \Psi_v).
\end{equation*}
To compute the resistance, we need the unit outward normal vector $N$ to $\gamma$, which is
\begin{equation*}
N = \frac{1}{\sqrt{u'^2 +v'^2 f(u)^2}} \left(v' f(u) \Psi_u - \frac{u'}{f(u)} \Psi_v \right).
\end{equation*}
We now analyze the impact of the particles on the boundary curve $\gamma$.  We assume that the collision of the fluid particles with  $\gamma$ is perfectly elastic and that if the particles hit   $\gamma$, they do so only once. At each point $\gamma(s)$, we decompose $\vec{v}_i$ as 
\begin{equation}\label{vi}
\vec{v}_i = \cos\theta T-\sin\theta N,
\end{equation}
for a certain (non-constant) angle. Upon impact, the tangential component of the particle's velocity is preserved, while the normal component is reversed. Thus, the final velocity $\vec{v}_f$ after the collision is 
 $$\vec{v}_f = \cos\theta T+\sin\theta N.$$
 Thus, we can also write $\vec{v}_f$ as $\vec{v}_f = \vec{v}_i - 2\langle \vec{v}_i, N \rangle N$.  The momentum transferred from a particle to the body is proportional to the change in velocity $\vec{v}_f - \vec{v}_i = -2\langle \vec{v}_i, N \rangle N$. The resistance experienced by the body is defined as the component of this momentum transfer in the direction opposing the flow ($-\vec{v}_i$). Therefore,   using \eqref{vi}, the resistance density   is  
\begin{equation}\label{r2}
d\R = \langle \vec{v}_f - \vec{v}_i, -\vec{v}_i \rangle = 2\langle \vec{v}_i, N \rangle^2= \frac{2v'^2f(u)^2}{u'^2 +v'^2 f(u)^2}.
\end{equation}
In the   $(u,v)$ coordinate system, assuming that the mass is conserved, the fluid density $\rho(u)$ must 
satisfy the equation $\text{div}(\rho \vec{v}_i)=-\text{div}(\rho\partial_u) = 0$. The expression for the divergence of a vector field $X$ in coordinates $X^1\Psi_u+X^2\Psi_v$ and with respect to the metric \eqref{ds} is 
 $\text{div} X = \frac{1}{\sqrt{g}} \sum_k \frac{\partial}{\partial x^k} \left( \sqrt{g} X^k \right)$, where $g=f^2$. Then $\text{div}(\rho \vec{v}_i)=0$ is equivalent to 
$$\frac{1}{f(u)} \frac{\partial}{\partial u} \big(\rho(u) f(u)\big) = 0.$$
This implies that $\rho(u) f(u)$ 
must be constant along the flow lines. Consequently, the density $\rho(u)$ is proportional to 
$ 1/f(u)$. This affects   the computation of  the total resistance because we must weight the local momentum transfer by the flux of particles 
 striking a differential 
 segment $d\gamma$ of $\gamma$. Because the particles travel along the $u$-curves, 
 the relevant geometric width of the flow intercepted by $d\gamma$ is its orthogonal 
 projection along the $v$-curves, 
 which is $f(u) dv=f(u)v'(t)\, dt$. Thus, the number of particles 
 impacting this segment per unit time is proportional to 
 $$ \rho(u) f(u) dv =\left( \frac{1}{f(u)} \right) f(u) dv = dv = v' dt.$$ 
 Multiplying \eqref{r2} by $v' dt$, dropping the constant $2$ and integrating, 
 we obtain the expression of the resistance.

\begin{definition}
 Let $\gamma(t) = \Psi(u(t), v(t))$, $t \in [a,b]$, be a (regular) curve in $(\Sigma,ds^2)$. The total resistance 
 functional of $\gamma$ with respect to the fluid of particles with velocity $-\partial_u$ is defined as
\begin{equation}\label{r1}
\R[\gamma] = \int_a^b \frac{v'^3 f(u)^2}{u'^2 +v'^2 f(u)^2} \, dt.
\end{equation}
\end{definition}

 \begin{remark} For the spaces  $\s^2 $ and in $\h^2$, one might consider them as   objects embedded in a higher-dimensional space: the Euclidean space $\r^3$ for $\s^2$ or the Lorentzian space $\r^{2,1}$ for $\h^2$. From this viewpoint, an alternative choice for the   velocity of the particles is the tangential projection $(\partial _z)^T$ of the constant ambient vector field $\partial_z$. However, this vector field does not have constant speed in $\s^2$ nor in $\h^2$. While $(\partial _z)^T$ is geometrically intuitive from an observer outside $\m^2(c)$, a variable speed field 
suggests the existence of external forces acting upon the particles, which 
contradicts the minimal resistance framework, where resistance arises solely from geometric impact. 
\end{remark}
 
 \section{Extremals of the functional}\label{s3}

In this section, we prove that the extremals for the resistance functional \eqref{r1} can be obtained explicitly, by showing that they   are the loxodromes of the space. The first step is to  deduce the Euler-Lagrange equations for the energy \eqref{r1}. The extremals of the functional    \eqref{r1} are smooth solutions of these Euler-Lagrange equations
$$\frac{\partial \mathcal{L}}{\partial u} - \frac{d}{dt} \left( \frac{\partial \mathcal{L}}{\partial u'} \right)=0, \quad \frac{\partial \mathcal{L}}{\partial v} - \frac{d}{dt} \left( \frac{\partial \mathcal{L}}{\partial v'} \right)=0.$$
 The Lagrangian of the resistance functional is $\mathcal{L}(u, u') = \frac{v'^3f^2}{ u'^2+v'^2f^2}$. A straightforward computations leads to  the following result.

 \begin{proposition}  The Euler-Lagrange equations of the resistance functional \eqref{r1} are
 \begin{equation}\label{eq1}
 \begin{split}
 0&= v'^3 \left( 3u'^2 - f(u)^2 v'^2 \right) \left[ f(u) u'' - \frac{f(u) u'}{v'} v'' - f'(u) u'^2 \right] ,\\
 0&= u' v'^2 \left( 3u'^2 - f(u)^2 v'^2 \right) \left[ f(u) u'' - f(u) \frac{u'}{v'} v'' - f'(u) u'^2 \right].
 \end{split}
 \end{equation}
 Moreover, there is a constant $C$ such that
 \begin{equation}\label{c1}
 \frac{v'^2 f(u)^2 (3u'^2 + v'^2 f(u)^2)}{(u'^2 + v'^2 f(u)^2)^2} = C.
 \end{equation}
 \end{proposition}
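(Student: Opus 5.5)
The plan is to exploit two structural features of the Lagrangian $\mathcal{L}=\frac{v'^3 f(u)^2}{u'^2+v'^2f(u)^2}$. First, $\mathcal{L}$ does not depend on $v$. Second, $\mathcal{L}$ is positively homogeneous of degree one in $(u',v')$, which reflects the reparametrization invariance of \eqref{r1}. Throughout I write $D=u'^2+v'^2f^2$, which is positive for regular curves.

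\emph{Step 1: the first integral \eqref{c1}.} Since $\partial\mathcal{L}/\partial v=0$, the second Euler--Lagrange equation says $\frac{d}{dt}\big(\partial\mathcal{L}/\partial v'\big)=0$, so $\partial\mathcal{L}/\partial v'$ is a constant $C$ along extremals. Differentiating directly,
$$\frac{\partial\mathcal{L}}{\partial v'}=\frac{3v'^2f^2D-2v'^4f^4}{D^2}=\frac{v'^2f^2(3u'^2+v'^2f^2)}{D^2},$$
which is exactly \eqref{c1}.

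\emph{Step 2: the $u$-equation.} I compute
$$\frac{\partial\mathcal{L}}{\partial u}=\frac{2v'^3ff'u'^2}{D^2},\qquad \frac{\partial\mathcal{L}}{\partial u'}=-\frac{2u'v'^3f^2}{D^2}.$$
Next I differentiate the second expression in $t$, using $D'=2u'u''+2v'v''f^2+2v'^2ff'u'$, and multiply the whole equation by $D^3/2$ to clear denominators. Collecting terms, I expect the result to reorganize as
$$v'^3\,(3u'^2-f^2v'^2)\Big[f u''-\tfrac{fu'}{v'}v''-f'u'^2\Big]$$
up to a constant factor and an overall factor of $f$. This is the first line of \eqref{eq1}.

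The main obstacle is this factorization. To make it transparent, I will group the terms in $u''$, in $v''$ and in $f'$ separately. I will check that each group carries the common factor $(3u'^2-f^2v'^2)$, using $D-4u'^2=v'^2f^2-3u'^2$ and the analogous identity for the $v''$ coefficient.

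\emph{Step 3: the $v$-equation from homogeneity.} Rather than differentiating \eqref{c1} by hand, I will use the degree-one homogeneity. Euler's relation gives $\mathcal{L}=u'\mathcal{L}_{u'}+v'\mathcal{L}_{v'}$. Differentiating this and comparing with $\frac{d}{dt}\mathcal{L}$ yields the Noether identity
$$u'E_u+v'E_v=0,$$
where $E_u,E_v$ denote the two Euler--Lagrange expressions. Hence $E_v=-\frac{u'}{v'}E_u$. This shows that the $v$-equation carries the same bracket as the $u$-equation, with the coefficient $v'^3$ replaced by $u'v'^2$ (up to sign, which is irrelevant when setting it equal to zero). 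That is the second line of \eqref{eq1}.

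As a sanity check, I will directly differentiate $\partial\mathcal{L}/\partial v'$ from Step 1 and confirm that the same bracket appears. Finally, I will note that the two equations are therefore dependent, so extremals are governed by \eqref{c1} together with the single bracket equation.
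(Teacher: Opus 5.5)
Your proposal is correct. Step 1 is exactly the paper's argument for \eqref{c1}, and Step 2 is the ``straightforward computation'' the paper invokes without details. Your grouping does close up: one gets precisely $\frac{D^3}{2}E_u=-f\,v'^3(3u'^2-f^2v'^2)\,[fu''-\frac{fu'}{v'}v''-f'u'^2]$. Since $D^3/2$ and $f$ are nonzero, this is the first line of \eqref{eq1}.

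Where you genuinely depart from the paper is Step 3. Instead of differentiating $\partial\mathcal{L}/\partial v'$ a second time, you use the degree-one homogeneity of $\mathcal{L}$ in $(u',v')$, i.e.\ reparametrization invariance. This gives the Noether identity $u'E_u+v'E_v=0$, hence $E_v=-\frac{u'}{v'}E_u$.

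This buys more than the brute-force route. It explains structurally why both lines of \eqref{eq1} carry the same two factors $(3u'^2-f^2v'^2)$ and the bracket, with only $v'^3$ traded for $u'v'^2$. It also shows that the system is really a single equation. The paper uses that fact tacitly when it passes to the graph parametrization $u=u(v)$ and writes the single equation \eqref{EL}.

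The only care needed is that $E_v=-\frac{u'}{v'}E_u$ requires $v'\neq 0$. That restriction is already implicit in \eqref{eq1}, which contains $1/v'$. After clearing that denominator, both sides are continuous in $(u,u',v',u'',v'')$ on $D>0$, so the identity extends to $v'=0$. Your planned direct differentiation of $\partial\mathcal{L}/\partial v'$ as a cross-check would confirm the same bracket, with an overall sign opposite to that of the $u$-equation, which is harmless.
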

 
 \begin{proof}
   For \eqref{c1}, note that $\mathcal{L}$ does not depend on $v$. Thus, $\frac{\partial \mathcal{L}}{\partial v'} $ is constant. 
 \end{proof}

From \eqref{eq1}, it follows that the parametric curves are extremals. 

\begin{corollary}
Meridians and parallels are extremals of the resistance functional.
 \end{corollary}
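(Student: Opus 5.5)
The plan is to check directly that the coordinate curves satisfy the Euler--Lagrange system \eqref{eq1}. Each equation there is a product of three factors: a power of $v'$ (or $u'v'^2$), the factor $3u'^2-f(u)^2v'^2$, and the bracket $B:=f(u)u''-\frac{f(u)u'}{v'}v''-f'(u)u'^2$. It is therefore enough to show that, along each type of coordinate curve with a suitable regular parametrization, one of these factors vanishes identically.

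\textbf{Meridians.} Parametrize a meridian $v=v_0$ by $t\mapsto \Psi(t,v_0)$, so $u'=1$ and $v'=0$. The first equation in \eqref{eq1} carries the factor $v'^3$, and the second carries $v'^2$, so both vanish. The only delicate point is that $B$ contains $u'v''/v'$, which is formally singular when $v'=0$. To avoid this, I would not use the simplified form \eqref{eq1}. Instead I would note that in the raw Euler--Lagrange expressions every term of $\partial\mathcal L/\partial u$, $\partial\mathcal L/\partial u'$ and $\partial\mathcal L/\partial v'$ contains a factor $v'^2$ at least. Hence all three quantities vanish identically along the meridian, and so do their $t$-derivatives. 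Equivalently, multiply the bracket by $v'$ before evaluating. Either way both equations hold, and the conservation law \eqref{c1} holds with $C=0$.

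\textbf{Parallels.} Parametrize a parallel $u=u_0$ by $t\mapsto\Psi(u_0,t)$, so $u'=u''=0$, $v'=1$ and $v''=0$. Then $B=f(u_0)\cdot 0-0-f'(u_0)\cdot 0=0$, and no singularity arises since $v'\neq 0$. Both equations in \eqref{eq1} are satisfied. As a consistency check, \eqref{c1} gives $C=f^4/f^4=1$, a constant.

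\textbf{Expected obstacle.} The only real subtlety is the division by $v'$ in the meridian case. I would settle it by working with the unsimplified Euler--Lagrange equations, where the factor of $v'^2$ in every term makes vanishing immediate. Another option is to note that a reparametrization does not change whether a curve is an extremal, since $\R$ is invariant under orientation-preserving reparametrization: $\mathcal L$ is homogeneous of degree one in $(u',v')$. With that in hand, one can take any parametrization in which the formulas are well defined.
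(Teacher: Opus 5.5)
Your proposal is correct and follows the paper's argument: the paper simply reads off from the factored system \eqref{eq1} that $v'\equiv 0$ kills the $v'^3$ and $u'v'^2$ factors along meridians, and that $u'\equiv u''\equiv 0$ kills the bracket along parallels. Your extra step for meridians is a tightening the paper skips: you use the unsimplified expressions $\partial\mathcal L/\partial u$, $\partial\mathcal L/\partial u'$, $\partial\mathcal L/\partial v'$, each of which carries a factor $v'^2$, to avoid the formal $1/v'$. Your alternative reparametrization remark would not help there, since $v'\equiv 0$ in every parametrization of a meridian, but your main argument does not depend on it.
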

 
 The value of the resistance functional     for the parametric curves is easy to calculate. 
 \begin{enumerate}
 \item For the meridians $v' = 0$, we have $\R[\gamma] = 0$. Physically, the meridian is aligned with the flow lines, intercepting no particles and thus offering zero resistance.
 \item For the parallels $u' = 0$, we have $\R[\gamma] = \int_{t_0}^{t_1} v' \, dt = v(t_1)-v(t_0)$. That is, the resistance depends only on the angular amplitude between  the endpoints of the parallel.
\end{enumerate}

Meridians and parallels are solutions to two simple problems of minimization and maximization, respectively, which justifies the fact that both curves are extremals of the resistance functional. The first result is a consequence of the fact that the resistance is non-negative and that for meridians, the resistance is $0$.

\begin{corollary}
Consider two points $A = (u_0, v_0)$ and $B = (u_0, v_1)$ on $\Sigma$ in geodesic coordinates. If $v_0 = v_1$, then the meridian segment between both points is the global minimizer of the resistance functional among all piecewise $C^1$-curves connecting $A$ and $B$. 
\end{corollary}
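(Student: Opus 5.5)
The plan is to compare with a lower bound: show that every competitor has resistance at least $0$, and that the meridian segment attains exactly $0$. I read the statement as concerning two points on the same meridian, $A=(u_0,v_0)$ and $B=(u_1,v_0)$; literally $A=B$ as written, which looks like a typo. The meridian is the curve $t\mapsto\Psi(t,v_0)$. Along it $v'\equiv 0$, so the integrand of \eqref{r1} vanishes identically and $\R=0$, as already recorded after the preceding corollary. What remains is to prove $\R[\gamma]\ge 0$ for every admissible piecewise $C^1$-curve $\gamma$ from $A$ to $B$.

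\textbf{Sign of the integrand.} The integrand of \eqref{r1} equals
$$v'\cdot\frac{v'^2f(u)^2}{u'^2+v'^2f(u)^2}=v'\,\langle\vec v_i,N\rangle^2 .$$
Its sign is therefore the sign of $v'$. The second factor is the physical momentum transfer from \eqref{r2}, which is always nonnegative. The first factor is the particle flux $dv$.

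\textbf{Main obstacle.} I expect the real difficulty to be that \eqref{r1} is not sign-definite as written. Here is a concrete failure. Take a curve that leaves $A$ and increases $v$ along a very steep path, so that $u'$ is large relative to $v'f$ and the contribution is almost $0$. Then let it return to $v=v_0$ along an arc of a parallel. That return arc contributes roughly $-\Delta v<0$ by the computation for parallels above. So the literal infimum over all piecewise $C^1$ curves is negative, and the claimed nonnegativity only holds under one of two conventions. I would handle this in one of the following two ways.

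\emph{(a) Physical flux convention.} The number of particles hitting a segment is proportional to $|dv|$, not $dv$. With this convention the density $|v'|\,\langle\vec v_i,N\rangle^2$ is pointwise $\ge 0$. Hence $\R[\gamma]\ge0=\R[\text{meridian}]$ for every competitor, and the corollary follows at once.

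\emph{(b) Monotone admissible class.} This is the class used later in the paper, consisting of curves with $v'\ge 0$. There the integrand is pointwise $\ge0$. Moreover, $v(a)=v(b)=v_0$ together with monotonicity forces $v\equiv v_0$, so any competitor is itself a piece of the meridian. In particular the meridian is the minimizer, and it is unique up to reparametrization and backtracking along the meridian.

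I would state explicitly which convention is in force, prefer (b) to match the paper's later admissible class, and include the remark above to explain why some restriction of this kind is needed.
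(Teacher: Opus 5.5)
Your argument is the paper's own: the paper justifies the corollary in one line by $\R\ge 0$ together with $\R=0$ on the meridian, and your reading $B=(u_1,v_0)$ is clearly the intended one. Your caveat is correct and sharper than the paper, whose nonnegativity claim tacitly requires $v'\ge 0$ because the integrand of \eqref{r1} has the sign of $v'$. Indeed, if your steep-out, parallel-back curve uses a loxodromic first leg of amplitude $\Delta$, its resistance is exactly $-\Delta L^2/(\Delta^2+L^2)<0$, where $L=|\Phi(u_1)-\Phi(u_0)|$. Note, though, that convention (b) makes the statement trivial, since every competitor then lies on the meridian; so (a), or counting only the windward segments where $v'>0$, is the reading under which the corollary has content.
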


We now prove that parallels are maximizers of the resistance problem. 

\begin{corollary}\label{co34}
Consider two points $A = (u_0, v_0)$ and $B = (u_1, v_1)$ on $\Sigma$ in geodesic coordinates, with $u_0 =u_1$. Then the segment of the parallel between these points is the global maximizer of the resistance functional among all piecewise $C^1$-curves connecting $A$ and $B$. 
\end{corollary}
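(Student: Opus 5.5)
The plan is a pointwise comparison of the integrand with $v'$. For any piecewise $C^1$ curve $\gamma(t)=\Psi(u(t),v(t))$, $t\in[a,b]$, the integrand of \eqref{r1} factors as
$$\frac{v'^3 f(u)^2}{u'^2+v'^2f(u)^2}= v'\cdot \frac{v'^2f(u)^2}{u'^2+v'^2f(u)^2}.$$
The second factor, call it $\lambda(t)$, lies in $[0,1]$, and it equals $1$ exactly when $u'=0$ (or $v'\neq 0$ with $u'=0$). Hence wherever $v'\ge 0$ we get the bound $\mathcal{L}\le v'$.

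Where $v'<0$ the integrand is nonpositive, so $\mathcal{L}\le 0\le |v'|$. To keep a clean comparison with $v'$ itself, I would use $\mathcal{L}=v'\lambda$ directly. For $v'\ge0$ this gives $\mathcal{L}\le v'$. For $v'<0$ it gives $\mathcal{L}= v'\lambda\ge v'$, which goes the wrong way. This sign issue is the main obstacle.

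I would handle it as follows. I interpret the admissible class as curves that are hit by the flow from the correct side, i.e. $v'\ge 0$ (the paper implicitly assumes the particles strike $\gamma$, so the resistance density is counted with $v'\,dt\ge0$ flux). In that class, integrating gives
$$\R[\gamma]\le \int_a^b v'\,dt=v_1-v_0=\R[\text{parallel}],$$
using the computation that a parallel has resistance $v(t_1)-v(t_0)$.

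If instead general sign changes of $v'$ are allowed, I would argue that we need $\int v'\lambda\le \int v'$. This follows from $\int v'(1-\lambda)\ge0$, which is not automatic. In that case I would fall back on the physical convention that only portions with $v'>0$ are impacted, so the integrand is $\max(\mathcal{L},0)$, and the same bound follows.

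Finally, for equality: $\R[\gamma]=v_1-v_0$ forces $\lambda=1$ almost everywhere where $v'>0$, i.e. $u'=0$ there. On the remaining set $v'=0$, so $u'$ must also vanish for the curve to stay on the same parallel; since $u_0=u_1$, this shows $\gamma$ is the parallel segment, giving the maximizing property.
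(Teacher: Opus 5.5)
Your main step is the paper's argument: write the integrand as $v'\lambda$ with $\lambda=\frac{v'^2f^2}{u'^2+v'^2f^2}\in[0,1]$, bound it by $v'$, and integrate. You are right that $\mathcal{L}\le v'$ needs $v'\ge 0$; the paper applies it to all curves without comment. But $v'\ge 0$ is not merely a convenient reading of the admissible class. It is a hypothesis the statement needs, because the statement is false without it.

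\textbf{A counterexample without $v'\ge0$.} On any arc where $v$ is strictly monotone, the integrand is $\frac{dv}{1+y_v^2}$ (with orientation), where $y=\Phi(u)$. Take $v_0<v_1$ and $v_m=\frac12(v_0+v_1)$, and build the following curve:
\begin{itemize}
\item run along the parallel from $A$ to $B$, which contributes $\Delta v$;
\item return from $B$ to $(u_0,v_m)$ along a sawtooth with $dy/dv=\pm K$, using many small teeth so that it stays in the chart and ends at $u=u_0$, which contributes $-\frac{\Delta v/2}{1+K^2}$;
\item run along the parallel from $(u_0,v_m)$ to $B$ again, which contributes $\frac{\Delta v}{2}$.
\end{itemize}
The total is $\Delta v+\frac{\Delta v}{2}\frac{K^2}{1+K^2}>\Delta v$.

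\textbf{Your fallback fails.} If only the portions with $v'>0$ count, the functional becomes $\int\max(\mathcal{L},0)\,dt\le\int (v')^+\,dt$. This bound exceeds $v_1-v_0$ whenever $v'<0$ on a set of positive measure. For example, traversing the parallel $A\to B\to A\to B$ already gives $2\Delta v$. So the correct statement is the one restricted to $v'\ge 0$, and there your integration argument is complete.

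\textbf{Your equality argument is wrong.} On the set where $v'=0$ the integrand is $0=v'$, so equality in $\mathcal{L}\le v'$ imposes no condition on $u'$ there. Nothing forces the curve to stay on the parallel $u=u_0$; it only has to end there. Equality holds if and only if $u'v'=0$ almost everywhere. By regularity, this means every $C^1$ piece is either a meridian arc or a parallel arc. For instance, take the curve that goes from $A$ up the meridian to $(u_0+h,v_0)$, along the parallel $u=u_0+h$ to $v=v_1$, and down the meridian to $B$. It has $v'\ge 0$ and resistance exactly $\Delta v$. Hence the parallel segment is \emph{a} global maximizer but not the unique one. The paper's proof establishes only the inequality, and you should drop the uniqueness claim.
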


\begin{proof} The segment of the parallel connecting $A$ and $B$ is $\gamma_{par}(t)=\Psi(u_0,t)$, with $t\in [v_0,v_1]$. We know that $\R[\gamma_{par}]=v_1-v_0$. Let $ \gamma(t)=\Psi(u(t),v(t))$, with $t\in [a,b]$, be any piecewise $C^1$-curve such that $v(a)=v_0$ and $v(b)=v_1$. Then 
$$\R[\gamma] = \int_a^b \frac{v'^3 f(u)^2}{u'^2 +v'^2 f(u)^2} \, dt\leq \int_a^b v'(t) \, dt=v_1-v_0=\R[\gamma_{par}].$$
\end{proof}

With the exception of meridians, extremals are never tangent to meridians as they intersect them, as shown by the following result. 

\begin{proposition} If $\gamma$ is an extremal which is tangent to a meridian at some point, then $\gamma$ is a meridian.
\end{proposition}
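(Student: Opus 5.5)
The plan is to use the first integral \eqref{c1}. Along any extremal $\gamma$ the quantity
$$\Phi(u,u',v') = \frac{v'^2 f(u)^2 (3u'^2 + v'^2 f(u)^2)}{(u'^2 + v'^2 f(u)^2)^2}$$
equals a constant $C$. Because $\Phi$ is homogeneous of degree zero in $(u',v')$, it depends only on the angle between $\gamma'$ and the meridians. Let $\varphi$ be that angle, so that $\cos\varphi = u'/|\gamma'|$ and $\sin\varphi = f(u)v'/|\gamma'|$. Then
$$\Phi=\sin^2\varphi\,(3\cos^2\varphi+\sin^2\varphi)=\sin^2\varphi\,(1+2\cos^2\varphi).$$
We need $\gamma$ regular, so that $u'^2+v'^2f^2\neq 0$ and this expression makes sense.

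The first step handles the tangency point $t_0$. There $v'(t_0)=0$, so $\sin\varphi(t_0)=0$ and $C=\Phi(t_0)=0$. Hence $\Phi\equiv 0$ along the whole extremal. Since $1+2\cos^2\varphi\ge 1>0$, this forces $\sin\varphi\equiv 0$. As $f>0$, we get $v'\equiv 0$ on the interval where $\gamma$ is a smooth extremal. Therefore $v$ is constant and $\gamma$ lies on the meridian $v=v(t_0)$.

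The only delicate point is the domain of validity of \eqref{c1}. It holds for smooth extremals on an interval, and connectedness gives $v$ constant throughout. We also use regularity of $\gamma$ to keep $u'\neq0$ where $v'=0$, which is needed to define $\Phi$.

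Alternatively, one could argue from \eqref{eq1}. That route would require handling the degenerate factors $3u'^2-f^2v'^2$ and the division by $v'$, which is exactly why the conserved quantity is cleaner. I expect no real obstacle beyond noting the positivity of the factor $1+2\cos^2\varphi$.
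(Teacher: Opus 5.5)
Your proof is correct and is the paper's argument. You evaluate the first integral \eqref{c1} at the tangency point to get $C=0$, then use regularity to see that the factor $3u'^2+v'^2f(u)^2$ (your $1+2\cos^2\varphi$ after normalizing by $|\gamma'|^2$) never vanishes, which forces $v'\equiv 0$; the angle reformulation is only a cosmetic normalization of that same step.
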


\begin{proof} 
Suppose $\gamma(t)=\Psi(u(t),v(t))$. If there is a time $t_0$ such that $\gamma'(t_0)$ is proportional to $\Psi_u$ at $\gamma(t_0)$, then \eqref{c1} implies that the constant is $C=0$. Thus $v'^2(3u'^2+v'^2f(u)^2)=0$ along $\gamma$. Since the expression in parentheses cannot be $0$ because $\gamma$ is regular, we must have $v'(t)=0$ for all $t$. This proves that $\gamma$ is a meridian.
\end{proof}

From this result, any extremal, other than a meridian, can be parametrized as $u=u(v)$. This simplifies the Euler-Lagrange equation to
\begin{equation}\label{EL}
 \left( f(u)^2 - 3u'^2 \right) \left( f(u)u'' - f'(u)u'^2 \right)= 0.
\end{equation}
Also, the resistance functional \eqref{r1} becomes 
\begin{equation}\label{r11}
\R[u]=\int_{v_0}^{v_1}\frac{f(u)^2}{u'^2+f(u)^2}\, dv.
\end{equation}
To find the extremals, we solve the ODE \eqref{eq1} taking into account that the equation is formed by the product of two factors. 
Note that we can discard the terms $v'^3$ and $u'v'^2$ from the original system \eqref{eq1}  by the regularity of the curve. First, we investigate the solutions of the second factor. 

\begin{theorem} \label{t1}
  The extremals corresponding to the second factor of \eqref{eq1}, that is, 
\begin{equation}\label{f2}
 f(u) u'' - \frac{f(u) u'}{v'} v'' - f'(u) u'^2 =0
 \end{equation}
identically in the domain, are the loxodromes of $\Sigma$, that is, curves that make a constant angle with the meridians, which are given by the ODE
 \begin{equation}\label{lo1}
 u'=kv'f(u),
 \end{equation}
 where $k$ is a positive constant. The resistance of the loxodrome defined in $[v_0,v_1]$ is 
\begin{equation}\label{r5}
\mathcal{R}=\frac{v_1-v_0}{1+k^2}.
\end{equation}
\end{theorem}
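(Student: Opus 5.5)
Since $v'\neq 0$ along any non-meridian extremal (Proposition on tangency to meridians), we may divide \eqref{f2} by $f(u)u'$ wherever $u'\neq 0$. Using $f'(u)u'=\frac{d}{dt}f(u)$, the equation becomes
$$\frac{u''}{u'}-\frac{v''}{v'}-\frac{(f(u))'}{f(u)}=0,\qquad\text{that is,}\qquad \frac{d}{dt}\log\left|\frac{u'}{v' f(u)}\right|=0.$$
Hence $u'/(v'f(u))$ is constant on each interval where $u'\neq0$, which gives $u'=kv'f(u)$. The parallels ($u'\equiv 0$) correspond to $k=0$.

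To exclude sign changes and interior zeros of $u'$, we use the reparametrization $u=u(v)$. Then \eqref{f2} reads $f(u)u''-f'(u)u'^2=0$, a second-order ODE that is regular since $f>0$. Both $u'\equiv0$ and the loxodrome solutions satisfy it, so uniqueness of the initial value problem means that if $u'$ vanishes at one point, $u$ is a parallel. Otherwise $u'/f(u)=k$ is constant with $k\neq0$. Taking the orientation (or choosing the sign of $v$) so that $k>0$ gives the positive constant in the statement.

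For the geometric interpretation, the angle $\varphi$ between $\gamma'$ and the meridian direction $\Psi_u$ satisfies
$$\tan\varphi=\frac{|v'|f(u)}{|u'|}=\frac{1}{k},$$
because $|\Psi_u|=1$, $|\Psi_v|=f$ and the two vectors are orthogonal. So these curves make a constant angle with the meridians, i.e.\ they are loxodromes. Conversely, differentiating $u'=kv'f$ recovers \eqref{f2} directly, so loxodromes are solutions.

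For the resistance, we use \eqref{r11} with $u'=kf(u)$ (taking $v$ as the parameter), which gives
$$\frac{f^2}{k^2f^2+f^2}=\frac{1}{1+k^2},$$
a constant integrand. Integrating over $[v_0,v_1]$ yields \eqref{r5}.

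The main obstacle is the division step: justifying that $u'$ has no isolated zeros, so that the constant $k$ is global and not just piecewise. The uniqueness argument for the ODE handles this.
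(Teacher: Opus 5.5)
Your proof is correct and follows the paper's route: divide \eqref{f2} by $f(u)u'$, recognize the logarithmic derivative of $u'/(v'f(u))$, read off the constant angle, and integrate the constant integrand $1/(1+k^2)$. Two additions go beyond the paper's proof, which only obtains $u'=kv'f(u)$ on a neighbourhood of a point where $u'\neq0$: your ODE-uniqueness step (a zero of $u'$ forces a parallel) makes $k$ global, and you also check the converse.

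One small correction: reversing the curve's parametrization leaves $u'/(v'f(u))$ unchanged. So the normalization $k>0$ comes from the reflection $v\mapsto -v$, or from restricting to increasing $u$, not from the choice of orientation.
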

 
 \begin{proof}
 If $\gamma$ is a parallel or a meridian, the result is trivial. Suppose $u'\neq 0$ at some point $t_0$, meaning $u'(t)\not=0$ in a neighborhood $(t_0-\delta,t_0+\delta)$ of $t_0$. Working on this interval, we divide \eqref{f2} by $u'f(u)$ to obtain 
 $$ \frac{u''}{u'} -\frac{v''}{v'}- \frac{f'(u)}{f(u)} u' = 0.$$
 We can integrate this, deducing that there is a constant $k>0$ such that $ u'=kv'f(u)$. This proves \eqref{lo1}. Moreover,  the angle $\theta$ that $\gamma$ makes with the meridians is defined by 
 $$\cos\theta=\frac{\langle\gamma',\Psi_u\rangle}{|\gamma'|}=\frac{\langle u'\Psi_u+v'\Psi_v,\Psi_u\rangle}{\sqrt{u'^2+v'^2f^2}}= \frac{u'}{|v'|f\sqrt{1+k^2}}=\pm \frac{k}{\sqrt{1+k^2}}.$$
 Thus $\theta$ is constant. The formula of the resistance is a simple integration of \eqref{r1} using \eqref{lo1}.
\end{proof}

Theorem \ref{t1}  generalizes what happens in the classical Newton's problem. Indeed, in \cite{st} it was proved that the extremals of the functional \eqref{eq0} are the lines $y(x)=c\cdot x$ with $c\in\r$. All these lines  make a constant angle with the direction $(0,1)$ of the flow. 

We now particularize Theorem \ref{t1} in   the space forms $\m^2(c)$. In this   case, the loxodromes can be obtained explicitly by a simple integration of \eqref{lo1}, where $f$ is given by \eqref{ff}. See Fig  \ref{fig1}. Without loss of generality, we suppose that the extremals are expressed as  $u=u(v)$.

\begin{corollary}
In the space forms $\m^2(c)$, the extremals $\gamma(v)=\Psi(u(v),v)$ corresponding to the second factor of \eqref{eq1} are the following:
\begin{enumerate} 
\item Case $\s^2$. Spherical loxodromes given by $u(v) = 2 \arctan(A e^{kv}) - \frac{\pi}{2}$, with $A\in \r$. 
 \item Case $\r^2$. Logarithmic spirals $u(v) = A e^{kv} $, with $A\in\r$. 
\item Case $\h^2$. Hyperbolic loxodromes given by $ u(v)= 2 \operatorname{arctanh}(A e^{kv})$, with $A\in\r$. 
\end{enumerate}
\end{corollary}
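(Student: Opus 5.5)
The plan is to integrate the loxodrome equation \eqref{lo1} of Theorem \ref{t1} directly, using the three choices of $f$ in \eqref{ff}. Since any extremal other than a meridian can be written as $u=u(v)$, we take $v$ as the parameter, so $v'=1$ and \eqref{lo1} becomes the separable first-order ODE
\begin{equation*}
\frac{du}{f(u)} = k\, dv .
\end{equation*}
Integrating gives $F(u)=kv+c$, where $F$ is a primitive of $1/f$. Exponentiating then produces the multiplicative constant $A=\pm e^{c}$; we allow $A=0$ and signs so as to include the degenerate cases, which is why the statement takes $A\in\r$. We then invert $F$ in each case.

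\textbf{Case $\r^2$, $f(u)=u$.} Here $F(u)=\log u$, so $u=Ae^{kv}$. These are logarithmic spirals in polar coordinates.

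\textbf{Case $\h^2$, $f(u)=\sinh u$.} We use the primitive $\int du/\sinh u=\log\tanh(u/2)$. Then $\tanh(u/2)=Ae^{kv}$, so $u=2\operatorname{arctanh}(Ae^{kv})$. This is valid where $|Ae^{kv}|<1$, which is exactly the range of $\tanh$.

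\textbf{Case $\s^2$, $f(u)=\cos u$ with $u\in(-\pi/2,\pi/2)$.} We use the primitive of the secant, $\int du/\cos u=\log\tan\left(\frac{u}{2}+\frac{\pi}{4}\right)$. Then $\tan\left(\frac u2+\frac\pi4\right)=Ae^{kv}$, so $u=2\arctan(Ae^{kv})-\frac{\pi}{2}$. As a check, $u$ stays in $(-\pi/2,\pi/2)$ whenever $A>0$. Finally, we verify by differentiation that $u'=k\cos u$: since $\tan(u/2+\pi/4)=w:=Ae^{kv}$, we get $u'=2kw/(1+w^2)$, and $\cos u=\sin(2\arctan w)=2w/(1+w^2)$, as needed.

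\textbf{Main obstacle.} The only real work is choosing the right antiderivatives of $1/\cos u$ and $1/\sinh u$ and inverting them, so that the formulas come out in the stated arctan/arctanh form, while keeping track of the domains of validity. We will handle this by substituting the proposed formula back into $u'=kf(u)$ with half-angle identities, rather than by worrying about the branch choice inside the logarithm.
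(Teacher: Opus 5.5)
Your proposal is correct and is essentially the paper's argument, which consists of nothing more than integrating \eqref{lo1} with $f$ taken from \eqref{ff}; your separation of variables, inversion of the primitives, and back-substitution check $u'=k\cos u$ carry this out completely. Incidentally, your primitive $\log\tan\left(\frac{u}{2}+\frac{\pi}{4}\right)$ for the sphere is the one consistent with the stated formula, whereas the expression $\log\left(\tan\frac{u}{2}\right)$ listed for $c=1$ in Section~\ref{s4} is a primitive of $1/\sin u$ rather than of $1/\cos u$.
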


 \begin{figure}[h!t]
\centering
\includegraphics[width=0.27\linewidth]{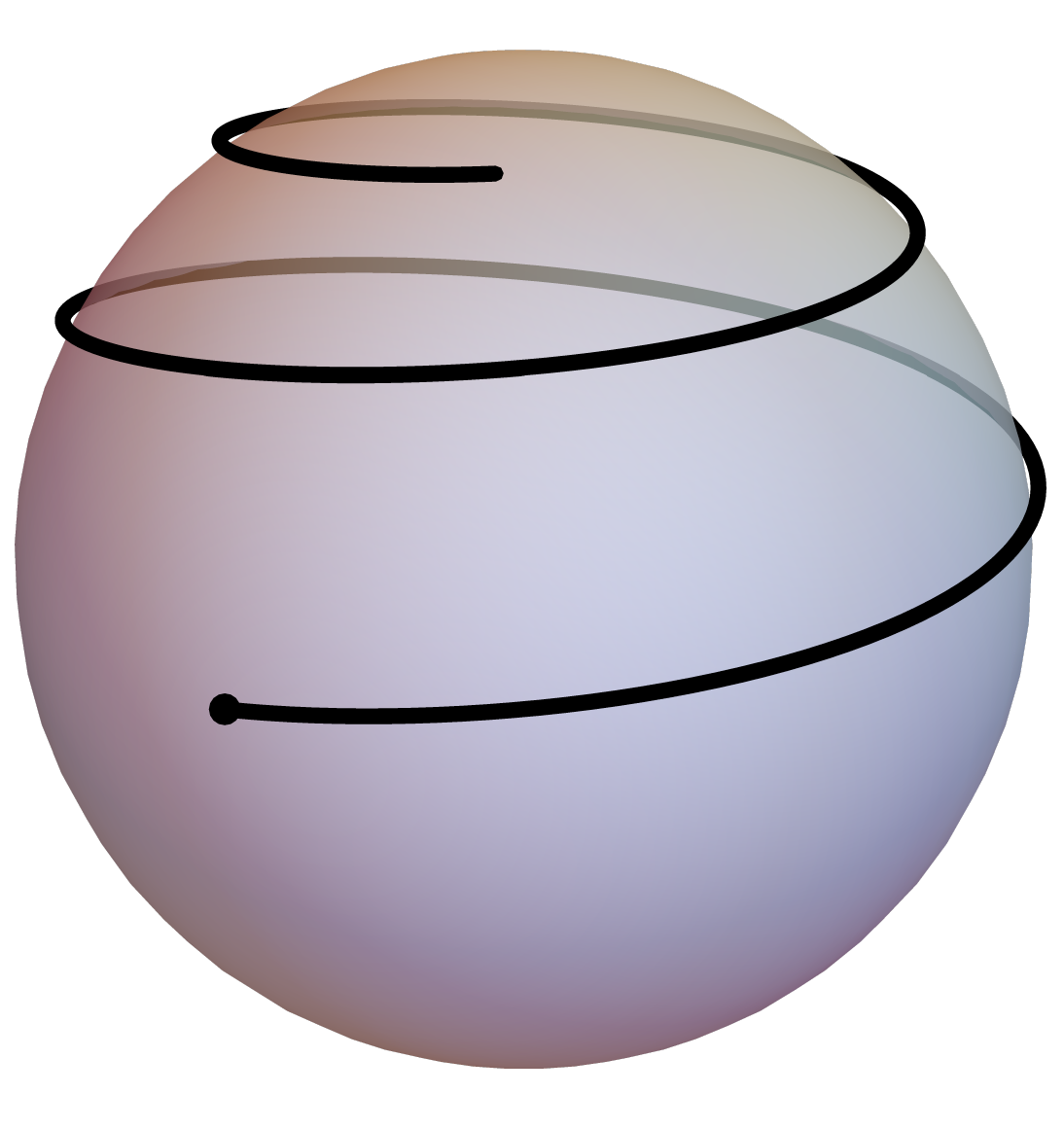}\quad \includegraphics[width=0.3\linewidth]{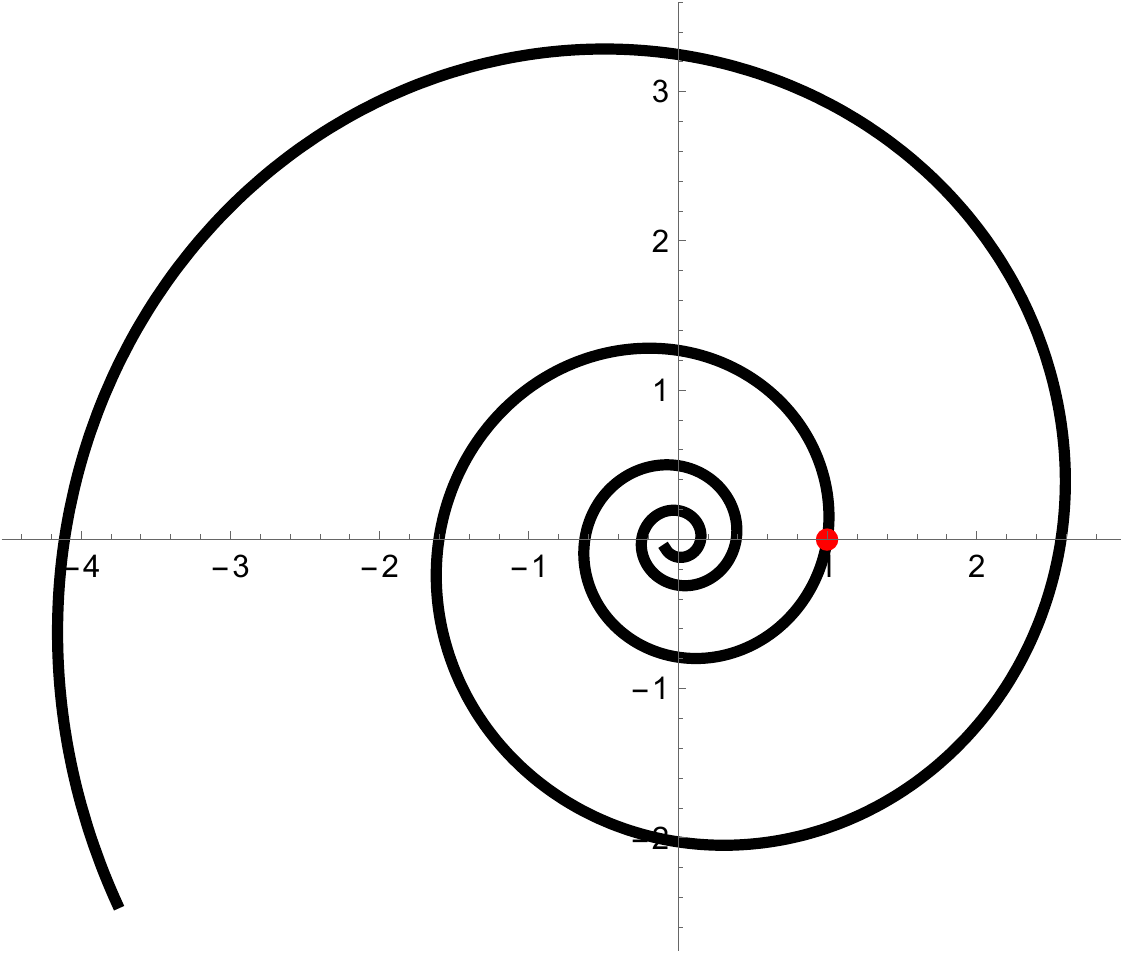}\quad \includegraphics[width=0.3\linewidth]{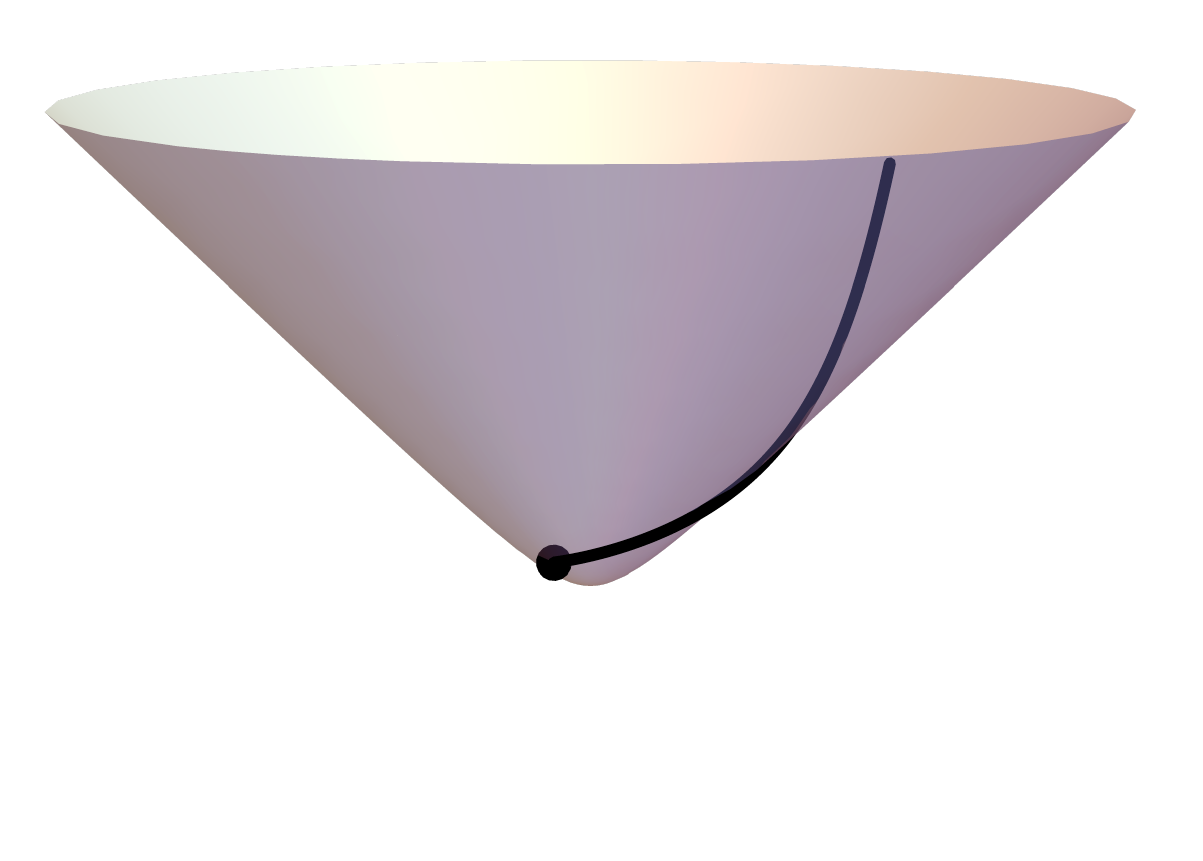}
\caption{Loxodromes in the three space-forms: sphere (left), plane (middle) and hyperbolic plane (right).}
\label{fig1}
\end{figure}

We now analyze the solutions of \eqref{eq1} coming from the first factor.

\begin{theorem}\label{t36}
  Extremals corresponding to the first factor of \eqref{eq1}, that is, 
$$ 3u'^2 - f(u)^2 v'^2 =0$$
identically in the domain, are the loxodromes of $\Sigma$ with angle $\pi/3$. 
\end{theorem}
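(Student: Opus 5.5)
The plan is to treat the identity $3u'^2 - f(u)^2 v'^2 = 0$ as a first-order ODE and solve it directly, without having to touch the second factor of \eqref{eq1}. Since $f>0$, the identity factors as
$$\bigl(\sqrt{3}\,u' - f(u)v'\bigr)\bigl(\sqrt{3}\,u' + f(u)v'\bigr)=0.$$
Hence at each point $u' = \pm \frac{1}{\sqrt3} f(u) v'$. Regularity of $\gamma$ then forces $v'\neq 0$: if $v'(t)=0$, the identity gives $u'(t)=0$ too, which is impossible. So we may parametrize by $v$, as in the reduction leading to \eqref{EL}, and read the identity as $u'(v)^2 = f(u)^2/3$ with $u'(v)\neq 0$ everywhere.

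The first step is to show that the sign is constant along the curve. On a connected parameter interval, the function $\sigma = \sqrt3\, u'(v)/f(u)$ is continuous (for $C^1$ extremals) and takes only the values $\pm1$. Therefore it is constant. The resulting equation, $u' = k v' f(u)$ with $k = \pm 1/\sqrt3$, is exactly \eqref{lo1} from Theorem \ref{t1}. After possibly reversing orientation in $v$, we may take $k=1/\sqrt3>0$, so the curve is a loxodrome.

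The second step is the angle computation, repeated as in the proof of Theorem \ref{t1}:
$$\cos\theta = \pm\frac{k}{\sqrt{1+k^2}} = \pm\frac{1/\sqrt3}{\sqrt{4/3}} = \pm\frac12.$$
This gives $\theta=\pi/3$ (or its supplement, i.e.\ the same line) between $\gamma$ and the meridians. Conversely, any loxodrome with this angle has $k^2=1/3$, so it satisfies $3u'^2 = f^2v'^2$ identically. It therefore annihilates the first factor and is an extremal, which gives the reverse inclusion.

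The main obstacle is the sign-constancy step, since the factorization alone only yields the relation pointwise. It relies on continuity of $u'$. If one allows only piecewise $C^1$ curves, the sign could switch at corners, producing broken curves made of $\pi/3$-loxodrome arcs. I would therefore state the result for smooth extremals, as in the theorem's setting, and remark that on each $C^1$ piece the curve is a $\pi/3$-loxodrome arc. Finally, I would note that these curves also satisfy the second factor \eqref{f2}, since every loxodrome does by Theorem \ref{t1}, so the two families overlap exactly in the $\pi/3$ loxodromes.
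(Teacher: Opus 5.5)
Your proposal is correct and follows the paper's route. Like the paper, you take the square root of $3u'^2 = f(u)^2v'^2$ to obtain the loxodrome equation $u' = k v' f(u)$ with $k = 1/\sqrt{3}$, and then compute $\cos\theta = k/\sqrt{1+k^2} = 1/2$ as in the proof of Theorem~\ref{t1}. Your extra steps are details the paper's one-line proof takes for granted, and they are sound: ruling out $v'=0$ by regularity, using continuity to make the sign choice constant, and verifying the converse.
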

 
 \begin{proof}
If $ 3u'^2 - f(u)^2 v'^2 =0$ identically, then 
$$u'=\frac{1}{\sqrt{3}}v'f(u).$$
 Following the proof of the previous theorem, the angle $\theta$ that $\gamma$ makes with the meridians is $\pi/3$ because $\cos\theta= 1/2$. 
\end{proof}

The value $1/\sqrt{3}$ in Theorem \ref{t36} appears in the classical problem  of $\r^3$ (also in the two-dimensional version \cite{st}).  In Newton's resistance problem of $\r^3$ with velocity of the flow given by the direction $(0,0,1)$, suppose that   $\mathcal{B}$ is a rotationally symmetric about the $z$-axis. If  the boundary surface is    parametrized in radial coordinates $u=u(r)$, then the maximal domain of $u(r)$ is defined just until the slope of $u$ is $u'(r)=1/\sqrt{3}$.

Following up on  the value $1/\sqrt{3}$, we see that this number is not arbitrary in the following 
sense. In the calculus of variations, a necessary condition for an extremal curve to yield a local 
minimum of a functional is the Legendre condition, which requires the second partial derivative 
of the Lagrangian with respect to $u'$ to be strictly positive. We now provide a relationship between 
this condition and the constant $1/\sqrt{3}$.

\begin{proposition}[Legendre condition] \label{pr37}
 If an extremal curve $\gamma(t)=\Psi(u(t),t)$ of the energy \eqref{r11} is a weak local minimum, then $3u'^2 > f(u)^2$. 
\end{proposition}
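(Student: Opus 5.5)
The plan is to apply the classical Legendre necessary condition to the Lagrangian of the reduced functional \eqref{r11}. For a curve parametrized as $u=u(v)$, this Lagrangian is
$$F(u,p)=\frac{f(u)^2}{p^2+f(u)^2},\qquad p=u'.$$
Legendre's theorem says that if an extremal is a weak local minimum, then $F_{pp}(u(v),u'(v))\ge 0$ along it. I will compute $F_{pp}$ explicitly, read off its sign, and then upgrade the weak inequality to a strict one using the structure of the extremals.

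First I compute the derivatives. Writing $F=f^2(p^2+f^2)^{-1}$, differentiation in $p$ gives
$$F_p=-\frac{2pf^2}{(p^2+f^2)^2}.$$
Differentiating once more,
$$F_{pp}=-\frac{2f^2}{(p^2+f^2)^2}+\frac{8p^2f^2}{(p^2+f^2)^3}=\frac{2f^2\,(3p^2-f^2)}{(p^2+f^2)^3}.$$
Since $f>0$ and the denominator is positive, the sign of $F_{pp}$ is exactly the sign of $3u'^2-f(u)^2$. This explains why the first factor in \eqref{EL} appears.

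Next I invoke Legendre's necessary condition. I will recall it briefly: if $F_{pp}<0$ at some point, a localized oscillating variation $\eta$ with small $\|\eta\|_{C^0}$ but large $\eta'$ makes the second variation $\int (F_{pp}\eta'^2+2F_{up}\eta\eta'+F_{uu}\eta^2)\,dv$ negative. Hence $3u'^2\ge f(u)^2$ must hold along any weak local minimum.

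The main obstacle is obtaining the strict inequality claimed in the statement. I will use the classification of extremals. By Theorems \ref{t1} and \ref{t36}, the extremals that are not meridians are loxodromes $u'=kf(u)$ with constant $k$, together with parallels ($k=0$). Along such a curve,
$$3u'^2-f^2=f^2(3k^2-1)$$
has a constant sign. There are three cases.

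\begin{itemize}
\item If $3k^2<1$, which includes parallels, the Legendre condition fails, so the curve is not a weak minimum.
\item If $3k^2>1$, the strict inequality holds.
\item If $3k^2=1$, this is the borderline loxodrome of Theorem \ref{t36}, where $F_{pp}\equiv 0$. Here the second-order test is degenerate. I would try to exclude this case by examining higher-order terms of the expansion of $F$ in $p$ around $p_0=f/\sqrt3$. Since $F_{pp}$ changes sign as $p$ crosses $p_0$, we get $F_{ppp}\neq 0$ there. An oscillating variation with $\eta'$ of small amplitude $\epsilon$ but with an asymmetric distribution of signs then makes the cubic term dominate with a negative sign, so the resistance decreases.
\end{itemize}

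If that cubic argument proves too delicate, the fallback is to state the strict inequality as the condition derived from the strengthened Legendre condition, which is how it is commonly phrased.
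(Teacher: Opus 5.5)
Your computation of $F_{pp}$ is the same as the paper's, but the paper's proof stops there. It reads off that $\partial^2\mathcal{L}/\partial u'^2>0$ if and only if $3u'^2>f(u)^2$, which tacitly treats Legendre's necessary condition as a strict inequality. You correctly note that the classical condition only gives $3u'^2\ge f(u)^2$. So the paper's argument, as written, does not exclude the borderline loxodrome of Theorem \ref{t36}. Your two extra steps close that gap: the sign of $3u'^2-f(u)^2$ is constant along an extremal, and a third-order argument handles the case $3k^2=1$. The core is the same, but your route gives a complete proof of the strict inequality actually stated.

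Three points need tightening.

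\textbf{(a)} ``$F_{pp}$ changes sign, hence $F_{ppp}\neq0$'' is not a valid inference in general. Compute it directly instead: at $p_0=f/\sqrt3$ one gets $F_{ppp}(u,p_0)=12f^2p_0/(p_0^2+f^2)^3\neq0$.

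\textbf{(b)} Theorems \ref{t1} and \ref{t36} only treat extremals on which one factor of \eqref{eq1} vanishes identically. They do not by themselves show that every extremal $u=u(v)$ has $u'=kf(u)$ with $k$ constant. Use the variable $y=\Phi(u)$ of Section \ref{s4}, in which $\mathcal{R}=\int g(y')\,dv$ with $g(p)=1/(1+p^2)$. The Euler--Lagrange equation is $\frac{d}{dv}g'(y')=0$. Since $g'(p)=-2p/(1+p^2)^2$ takes each value at most finitely often, $y'$ is constant by continuity.

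\textbf{(c)} The degenerate case is cleanest in the same variable. Let $\eta$ vanish at the endpoints, so $\int\eta'\,dv=0$. Then
\[
\mathcal{R}[y_0+t\eta]-\mathcal{R}[y_0]=\int_{v_0}^{v_1}\bigl[g(k+t\eta')-g(k)-g'(k)\,t\eta'\bigr]\,dv=\frac{t^3}{6}\,g'''(k)\int_{v_0}^{v_1}\eta'^3\,dv+O(t^4),
\]
because $g''(k)=0\neq g'''(k)$ when $k^2=1/3$. Choose $\eta$ with $\int\eta'^3\,dv\neq0$ and the sign of $t$ accordingly. This gives a $C^1$-small admissible variation that lowers the resistance. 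It also removes any need to control the $F_{uu}\eta^2$ and $F_{up}\eta\eta'$ terms, which your oscillating variation in the $u$ variable would otherwise have to beat.

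Finally, drop the fallback. The strengthened Legendre condition is a hypothesis of the sufficiency theorem, not a property every weak minimum must have, so invoking it would not prove the statement.
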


\begin{proof}
 A simple computation yields
\begin{equation*}
\frac{\partial^2 \mathcal{L}}{\partial u'^2} = \frac{2f(u)^2 \left( 3u'^2 - f(u)^2 \right)}{\left( f(u)^2 + u'^2 \right)^3}.
\end{equation*}
Thus $ \frac{\partial^2 \mathcal{L}}{\partial u'^2} >0$ if and only if $3u'^2 >f(u)^2$ everywhere. 
\end{proof}

From the physical viewpoint, the critical value $1/\sqrt{3}$  indicates that   minimizers of 
Newton's problem must cross the meridians at an angle   strictly less than $\frac{\pi}{3}$. 
We point out that this bound appears as a universal constant independent of the curvature of the Riemannian surface $\Sigma$.

 Another well-known result in Newtonian aerodynamics states that in $\mathbb{R}^3$, the resistance of a sphere is exactly $1/2$ of the resistance of a flat disk of the same radius. We now show a similar result in the two-dimensional setting. Consider the case of the plane $\r^2$. The flat disk is now replaced by a circular arc   $C_{arc}$ centered at the origin because the flow is perpendicular to this arc. The equivalent to the sphere is a piece of a circle $C_{tan}$ that is tangent to the cone determined by $C_{arc}$. In fact, there are two circles tangent to this cone.
 
 \begin{proposition} Let $C_{arc}$ be a circular arc centered at the origin with angular sector $v\in [-v_0,v_0]$, with $0<v_0<\pi$, and let $C_{tan}$ be one of the two circles tangent to the rays $v=\pm v_0$. Then $\R[C_{arc}]=2v_0$ and  
\begin{equation}\label{ec}
\R[C_{tan}]=\cot v_0 + v_0(1 - \cot^2 v_0).
\end{equation}
 Moreover, the ratio of resistance between $C_{tan}$ and $C_{arc}$ satisfies 
$$
 \lim_{v_0 \to 0} \frac{\R[C_{tang}]}{\R[C_{arc}]} = \frac{2}{3},\quad \lim_{v_0 \to \pi/2} \frac{\R[C_{tang}]}{\R[C_{arc}]} = \frac{1}{2}$$
 \end{proposition}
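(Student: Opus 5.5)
The plan is to compute both resistances directly from \eqref{r1} with $f(u)=u$, then read off the two limits by elementary asymptotics.

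\textbf{The arc $C_{arc}$.} This case is immediate. It is a parallel $u=\text{const}$, so by the computation following the corollary on parallels, $\R[C_{arc}]=v(t_1)-v(t_0)=2v_0$.

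\textbf{Setting up the integral for $C_{tan}$.} Since the integrand of \eqref{r1} is invariant under the dilations $u\mapsto\lambda u$ when $f(u)=u$, I will normalize the circle. Its center is at $(1,0)$ and its radius is $r=\sin v_0$, so that it is tangent to the rays $v=\pm v_0$. I parametrize it by the central angle, $(1+r\cos\alpha,\ r\sin\alpha)$.

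Rather than converting to $(u(\alpha),v(\alpha))$ and substituting into \eqref{r1}, I will use the geometric form \eqref{r2}. This multiplies $\langle \partial_u, N\rangle^2$ by $dv$. The outward normal is $N=(\cos\alpha,\sin\alpha)$ and the radial unit vector is $(x,y)/u$, with $u^2=1+r^2+2r\cos\alpha$. Hence
$$\langle \partial_u,N\rangle=\frac{r+\cos\alpha}{u},\qquad \frac{dv}{d\alpha}=\frac{r(r+\cos\alpha)}{u^2}.$$
This gives
$$\R[C_{tan}]=\int r\,\frac{(r+\cos\alpha)^3}{(1+r^2+2r\cos\alpha)^2}\,d\alpha .$$
The sign of $dv/d\alpha$ picks out the relevant arc. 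The condition $v'>0$ holds exactly for $|\alpha|<\pi/2+v_0$, because $\cos(\pi/2+v_0)=-r$. This is the arc facing the incoming flow, and it sweeps $v\in[-v_0,v_0]$. The other choice of circle, or the other arc, is treated in the same way.

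\textbf{Evaluating the integral (main obstacle).} I expect the explicit evaluation to be the main obstacle. My first attempt will be polynomial division in $w=\cos\alpha$. Writing $r+w=\frac{(1+r^2+2rw)-(1-r^2)}{2r}$ and expanding the cube, the integrand becomes a combination of $1+r^2+2rw$, a constant, $(1+r^2+2rw)^{-1}$ and $(1+r^2+2rw)^{-2}$. The first two terms integrate trivially over $[-(\pi/2+v_0),\pi/2+v_0]$. The last two are standard integrals of the type $\int(a+b\cos\alpha)^{-n}d\alpha$, handled with $t=\tan(\alpha/2)$. At the endpoints $1+r^2+2r\cos\alpha=\cos^2v_0$, which is what should produce the $\cot v_0$ terms. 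I will then simplify using $1-r^2=\cos^2v_0$ and aim for \eqref{ec}.

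As a fallback, I would use that the integrand equals $\cos^2\psi$, where $\psi$ is the angle between the ray and the normal. On a circle this angle satisfies $\sin\psi=\sin(\alpha-v)$ relations, so one can write $\psi$ as a function of $v$ and integrate in $v$ directly.

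\textbf{The limits.} As $v_0\to\pi/2$, we have $\cot v_0\to0$, so $\R[C_{tan}]\to\pi/2$ and the ratio tends to $\frac{\pi/2}{\pi}=\frac12$. As $v_0\to0$, I expand
$$\cot v_0=\frac1{v_0}-\frac{v_0}{3}+O(v_0^3),\qquad \cot^2v_0=\frac1{v_0^2}-\frac23+O(v_0^2).$$
The $1/v_0$ terms cancel and $\R[C_{tan}]=\frac43v_0+O(v_0^3)$. Dividing by $2v_0$ gives the limit $\frac23$.
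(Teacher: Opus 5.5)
Your proposal is correct. The arc value and both limits are fine, but your main route to \eqref{ec} differs from the paper's.

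The paper writes $C_{tan}$ as a polar graph $u(v)=u_c(\cos v\pm\sqrt{\cos^2 v-\cos^2 v_0})$ and uses \eqref{r11}. For either sign one has $(u'/u)^2=\sin^2 v/(\cos^2 v-\cos^2 v_0)$, so the integrand collapses to $(\cos^2 v-\cos^2 v_0)/\sin^2 v_0$. The integral is then elementary and gives $(v_0+\sin v_0\cos v_0-2v_0\cos^2 v_0)/\sin^2 v_0$, which is \eqref{ec}. This is exactly your fallback: in the triangle formed by the origin, the center and the point at polar angle $v$, the law of sines gives $\sin\psi=\sin v/\sin v_0$.

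Your central-angle computation also goes through. Set $D=1+r^2+2r\cos\alpha$, $c=\cos v_0$ and $\beta=\pi/2+v_0$. Then:
\begin{itemize}
\item $\int_{-\beta}^{\beta}D^{-1}\,d\alpha=(\pi-2v_0)/c^2$, since the Weierstrass substitution produces $\arctan(\tan(\pi/4-v_0/2))$;
\item $\int_{-\beta}^{\beta}D^{-2}\,d\alpha=(1+r^2)(\pi-2v_0)/c^6-4r/c^5$;
\item the four terms combine to $8r^2\R=8v_0(2r^2-1)+8rc$, i.e.\ $\R=(\sin v_0\cos v_0-v_0\cos 2v_0)/\sin^2 v_0$, which equals \eqref{ec}.
\end{itemize}

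Your version makes explicit which arc faces the flow ($v'>0$ iff $|\alpha|<\beta$) and justifies the normalization by scale invariance. The paper's version is much shorter and shows at once that the near and far arcs give the same value. In your setting that last point also follows quickly: the same four integrals taken over $[0,2\pi]$ sum to zero, so the near arc, traversed with $v$ increasing, contributes the same amount. This makes your phrase ``treated in the same way'' precise.

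One caveat applies to both arguments: they implicitly need $0<v_0\le\pi/2$. For $v_0>\pi/2$ no circle inside the sector $|v|\le v_0$ touches both rays, and the right-hand side of \eqref{ec} can even be negative (e.g.\ $v_0=3\pi/4$ gives $-1$). The stated range $0<v_0<\pi$ should be read with this restriction.
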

 
 \begin{proof}
 Let $R$ denote the radius of   $C_{arc}$, that is, $u(v)=R$, where $v\in [-v_0,v_0]$. We know that the resistance of this curve, which is a piece of a parallel, is the angular amplitude (see \eqref{r5}, where $k=0$). We now parametrize the two circles tangent to the cone determined by $C_{arc}$ and centered on the $x$-axis. A circle tangent to the rays $v = \pm v_0$ and centered at $(u_c, 0)$ must have a radius $r = u_c \sin v_0$. Its polar equation is $u^2 - 2u u_c \cos v + u_c^2 \cos^2 v_0 = 0$. Thus, 
\begin{equation}
u(v) = u_c \left( \cos v \pm \sqrt{\cos^2 v - \cos^2 v_0} \right) .
\end{equation}
The computation of the energy shows that the value does not depend on the sign taken in $ \pm \sqrt{\cos^2 v - \cos^2 v_0}$. A direct computation gives \eqref{ec}. The limits of the ratios of resistance are easily derived.
 
\end{proof}

\section{The constraint minimization problem and truncated extremals}\label{s4}

In this section, we present an initial approach to the minimization problem. The first issue is the definition of the space of admissible functions. Suppose initially that $\mathcal{B}$ can have an arbitrary height. We first examine this case when   the ambient space is $\r^2$. Assume that the boundary curve $\gamma$ is expressed in polar coordinates as $u=u(v)$. We denote by $\mathcal{R}[u]$ the resistance of $\gamma$. We fix two points of $\r^2$ in polar coordinates, namely, $A=(u_0,v_0)$ and $B=(u_1,v_1)$, with $u_0,u_1>0$ and $v_0<v_1$. Suppose that the second point $(u_1,v_1)$ is not prescribed (unconstrained  minimization problem) and let 
$$\mathcal{C}_0=\{ u \in PC^1([v_0,v_1],\r^+)\colon u(v_0)=u_0\}.$$
be the space of admissible functions. We see that the minimization problem in $\mathcal{C}_0$ has no   solution because we can find functions $u\in\mathcal{C}_0$ whose resistance is arbitrarily close to $0$. An initial  attempt is to mirror the classical setting and consider segments whose vertices are arbitrarily high.   To simplify the calculations, suppose that $A=(1,0)$ and $B=(u_1,\pi/2)$. In polar coordinates,  the segment joining $A$ and $B$ is given by 
$$u(v)=\frac{u_1}{u_1\cos v+\sin v},\quad v\in [0,\frac{\pi}{2}].$$
Its resistance is 
$$\mathcal{R}[u]=\frac{\pi}{4}+\frac{u_1}{1+u_1^2}.$$
If $u_1\to\infty$ (arbitrary height), then $\mathcal{R}[u]\to \pi/4$. This shows that the choice of segments, following a similar approach as in the classical Newton's problem, is not suitable for our purposes. In fact, this is expected since the segments are not well-suited to the radial geometry of the problem. 

Instead of segments, we consider the smooth extremals of $\mathcal{R}$, which are the loxodromes of the surface $\Sigma$. In geodesic coordinates, if we let $u=u_k(v)$ denote the solution of \eqref{lo1},   the resistance is given by \eqref{r5}, that is,   
$$\mathcal{R}[u_k] = \frac{v_1-v_0}{1+k^2},$$
which goes to $0$ as $k \to \infty$. This justifies the need to prescribe fixed   radii $u_0$ and $u_1$ at the endpoints $v=v_0$ and $v=v_1$ to obtain a well-posed minimization problem.  

Let us fix the two points $A$ and $B$ and consider admissible functions with radial values bounded between $u_0$ and $u_1$ ($u_0 < u_1$). Let 
$$\mathcal{C}_{0,1} = \{ u \in PC^1([v_0,v_1],\r^+)\colon u_0 \leq u(v) \leq u_1, u(v_0)=u_0, u(v_1)=u_1\}$$
be the space of admissible functions. From now on, we introduce the notation 
$$\Delta v= v_1 - v_0.$$

We show with an example that the minimization problem in $\mathcal{C}_{0,1}$ has no solution without some type of monotonicity constraint. By defining a sequence of functions $w_n \in \mathcal{C}_{0,1}$ whose radial value oscillates rapidly between $u_0$ and $u_1$, we see that the resistance tends to $0$ as the number of oscillations grows: see \cite{le} for the classical problem.

\begin{example}
To simplify the calculations, fix $v_0=0$ and $v_1=\pi/2$. Let $L = \int_{u_0}^{u_1} \frac{1}{f(t)} \, dt$. We define the sequence of functions $\{w_m\}_{m\in\mathbb{N}}$ in $\mathcal{C}_{0,1}$ as the unique solutions to the   initial value problem
\begin{equation*}
\begin{cases} 
w_m'(v) = m L f(w_m(v)) \sin(2mv), \\
w_m(0) = u_0.
\end{cases}
\end{equation*}
By separation of variables, we have 
$$\int_{u_0}^{w_m(v)} \frac{1}{f(t)} \, dt = \frac{L}{2} (1 - \cos(2mv)).$$
 Since the range of the right-hand side is $[0, L]$, the monotonicity of the integral in the left-hand side ensures that  $u_0 \leq w_m(v) \leq u_1$ for all $v \in [0, \pi/2]$. Moreover, for any odd  $m$, we have $w_m(\pi/2) = u_1$. Thus, we can ensure that $w_m \in \mathcal{C}_{0,1}$. 
 
We now compute  the resistance of $w_m$ using \eqref{r11}. Since  $(w_m')^2 = m^2 L^2 \sin^2(2mv)/f(w_m)^2$,   we obtain
$$\mathcal{R}[w_m] =   \int_0^{\pi/2} \frac{1}{1 + m^2 L^2 \sin^2(2mv)} \, dv.$$
Doing   the change of variables $\theta = 2mv$, we have
$$\mathcal{R}[w_m] = \frac{1}{2m} \int_0^{m\pi} \frac{1}{1 + m^2 L^2 \sin^2 \theta} \, d\theta = \frac{1}{2} \int_0^{\pi} \frac{1}{1 + m^2 L^2 \sin^2 \theta} \, d\theta,$$
where we have used that the integrand is $\pi$-periodic. 
As $m \to \infty$, the integrand converges to zero except when $\sin\theta=0$. Thus, the integrand converges  pointwise to zero  almost   everywhere. By the dominated convergence theorem, we conclude that $\mathcal{R}[w_m] \to 0$ as $m \to \infty$.
\end{example}

 \begin{corollary}
The resistance functional admits no global minimizer in the class of piecewise $C^1$-curves $\mathcal{C}_{0,1}$ connecting $A$ and $B$ and bounded between $u(A)$ and $u(B)$.
\end{corollary}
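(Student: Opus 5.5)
The corollary follows by combining the non-negativity of the resistance with the Example just above. The plan has two steps: show that the infimum of $\R$ over $\mathcal{C}_{0,1}$ equals $0$, and then show that no admissible curve attains the value $0$.

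\textbf{The infimum is zero.} Since the integrand in \eqref{r11} is non-negative, $\R[u]\geq 0$ for every $u\in\mathcal{C}_{0,1}$. The Example gives, for odd $m$, functions $w_m\in\mathcal{C}_{0,1}$ with $\R[w_m]\to 0$. These functions are smooth, so they are admissible.

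The Example was carried out with $v_0=0$ and $v_1=\pi/2$. For general $v_0<v_1$, I would rescale the construction. Set
$$\int_{u_0}^{w_m(v)}\frac{dt}{f(t)}=\frac{L}{2}\Bigl(1-\cos\bigl(\tfrac{(2m+1)\pi(v-v_0)}{\Delta v}\bigr)\Bigr).$$
Then $w_m(v_0)=u_0$ and $w_m(v_1)=u_1$, and $w_m$ stays between $u_0$ and $u_1$. The same change of variables, using periodicity and dominated convergence, gives $\R[w_m]\to 0$. Hence $\inf_{\mathcal{C}_{0,1}}\R=0$.

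\textbf{The infimum is not attained.} Suppose $u\in\mathcal{C}_{0,1}$ had $\R[u]=0$. The integrand $f(u)^2/(u'^2+f(u)^2)$ is strictly positive wherever it is defined, because $f>0$ and $u'$ is finite on each $C^1$ piece. A piecewise continuous, strictly positive function cannot have integral $0$ over the non-degenerate interval $[v_0,v_1]$. This contradiction shows that no global minimizer exists.

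The only delicate point is this second step. It relies on admissible curves being graphs $u=u(v)$ with finite derivative, which is exactly how $\mathcal{C}_{0,1}$ is defined. Meridian pieces, which would carry zero resistance, are excluded from the class. The remaining steps are routine.
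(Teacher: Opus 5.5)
Your proof is correct and follows the paper's route: the oscillating sequence from the Example shows $\inf_{\mathcal{C}_{0,1}}\R=0$, and your rescaling to a general interval $[v_0,v_1]$ with frequency $(2m+1)\pi/\Delta v$ checks out. You also make explicit the non-attainment step that the paper leaves implicit. Since $u'$ is bounded and $f>0$ on $[u_0,u_1]$, the integrand in \eqref{r11} is bounded below by a positive constant, so no admissible curve has zero resistance (this uses $v_0<v_1$ and $u_0<u_1$, both assumed in the setup).
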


From this result, we need to restrict the set $\mathcal{C}_{0,1}$.  In order to avoid the oscillations of the admissible functions, we assume that the functions $u=u(v)$ are  monotonically increasing. For $0<u_0<u_1$, define the space of admissible piecewise monotonic  $C^1$ functions as 
$$\mathcal{C}_{0,1}^m=\{ u \in PC^1([v_0,v_1],\r^+)\colon u'\geq 0,   u(v_0)=u_0, u(v_1)=u_1\}.$$ 
The monotonicity condition $u' \geq 0$ ensures that the curve  moves towards increasing radial values,   representing a physical aerodynamic front.  Notice that if the points have the same value of $u$, the set $\mathcal{C}_{0,1}^m$ only has one element, and the problem of minimization is trivial.

 Following   the analogy of the classical problem, we will consider truncated curves, that is curves joining the points $A$ and $B$ that  have exactly one point where 
 smoothness fails at that point \cite{go}. Motivated by the truncated cones in the classical context of $\r^3$ (or in the 
 two-dimensional case described in \cite{st}), we assume that around the endpoint $B$, the 
 curve $\gamma$ offers maximum resistance. This implies that the curve is orthogonal to the radial 
 direction of the flow and thus, the function $u(v)$ is constant, with $u(v)=u_1$ (see also Corollary \ref{co34}).  The second piece of $\gamma$ will be an extremal. 
 We define a truncated loxodrome $\gamma_{trunc}$, which begins as a loxodrome and ends as a circular arc matching the final radius $u_1$. See Figure  \ref{fig2} for the ambient space is $\r^2$. 
 
 First,  let us introduce the change of variable
 $$y(v)=\Phi(u(v))=\int\frac{1}{f(u)}\, du,$$
 which will be used throughout this paper.  Then  
\begin{equation}\label{16}
\mathcal{R}[u] = \int_{v_0}^{v_1} \frac{1}{1 + y'^2} \, dv.
\end{equation}
 For example, the loxodromes \eqref{lo1} can be expressed, after an integration by parts, as $\Phi(u)=k+C$, for some constant $C$. 
In the space forms $\m^2(c)$, we find
 $$\Phi(u)=\left\{\begin{array}{ll}
 \log(\tan\frac{u}{2})& c=1,\\
 \log(u)& c=0,\\
 \log(\tanh\frac{u}{2})&c=-1.\end{array}
 \right.$$
 
 \begin{definition}
We define a truncated loxodrome $\gamma_{trunc}^{v_c}$   connecting $A=(u_0, v_0)$ and $B=(u_1, v_1)$ as the curve $u=u(v)$ given by
\begin{equation*}
\Phi(u(v)) = \begin{cases} 
 k(v - v_0) +\Phi(u_0) & v \in [v_0, v_c] \\
\Phi(u_1) & v \in [v_c, v_1],
\end{cases}
\end{equation*}
 where   continuity at $v_c$ requires that $k$ takes the value 
 \begin{equation}\label{k1}
 k = \frac{\Phi(u_1) - \Phi(u_0)}{v_c - v_0}.
 \end{equation}
\end{definition}
Notice that $\Phi$ is an increasing function because $f>0$. Thus, if we assume that $u_0<u_1$, then the constant $k$ is positive.

\begin{theorem}\label{t51}
Let the boundary points be $A = (u_0, v_0)$ and $B = (u_1, v_1)$ on $\Sigma$, with $u_0 < u_1$ and $v_0 < v_1$. Consider the class of truncated loxodromes $\mathcal{C}_{0,1}^{trunc} = \{\gamma_{trunc}^{v_c} : v_c \in (v_0, v_1)\}$. Let $L = \Phi(u_1) - \Phi(u_0)$. If the angular amplitude satisfies $\Delta v > L$, then the minimum of the resistance functional $\R$ in $\mathcal{C}_{0,1}^{trunc}$ is attained in $\gamma_{trunc}^V$, where  the junction occurs at
\begin{equation}
V = v_0 + L,
\end{equation}
and the value of its resistance is 
\begin{equation}\label{rv}
\R[\gamma_{trunc}^{V}]=\Delta v-\frac{L}{2}.
\end{equation}
For $\gamma_{trunc}^V$, the slope of the loxodromic part is $k = 1$.
\end{theorem}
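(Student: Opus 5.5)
The plan is to reduce the minimization over the one-parameter family $\mathcal{C}_{0,1}^{trunc}$ to a one-variable calculus problem in the junction parameter $v_c$, using the closed-form resistance of loxodromes from Theorem \ref{t1} and of parallels.

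\textbf{Step 1: split the resistance at the junction.} Write $s=v_c-v_0\in(0,\Delta v)$ and split the integral \eqref{16} at $v_c$. On $[v_c,v_1]$ the curve is the parallel $u\equiv u_1$, so $y'=0$ and that piece contributes $v_1-v_c=\Delta v-s$ (this is also the case $k=0$ of \eqref{r5}). On $[v_0,v_c]$ the curve is the loxodrome $\Phi(u)=k(v-v_0)+\Phi(u_0)$, so $y'=k$ and, by \eqref{r5}, that piece contributes $s/(1+k^2)$. By \eqref{k1}, $k=L/s$.

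\textbf{Step 2: write the total resistance as a function of $s$.} Adding the two pieces gives
\[
\R[\gamma_{trunc}^{v_c}]=\frac{s^3}{s^2+L^2}+\Delta v-s=\Delta v-\frac{L^2 s}{s^2+L^2}.
\]
Minimizing $\R$ over $v_c$ is therefore equivalent to maximizing $g(s)=L^2 s/(s^2+L^2)$ on $(0,\Delta v)$.

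\textbf{Step 3: maximize $g$.} Differentiation gives
\[
g'(s)=\frac{L^2(L^2-s^2)}{(s^2+L^2)^2},
\]
so $g$ is strictly increasing on $(0,L)$ and strictly decreasing on $(L,\infty)$. The hypothesis $\Delta v>L$ is exactly what places the critical point $s=L$ inside the admissible interval $(0,\Delta v)$. Hence $s=L$ is the unique interior maximizer of $g$, with $g(L)=L/2$. This step is the only point where the hypothesis enters, and I would stress it there: if $\Delta v\le L$, the infimum would only be approached as $v_c\to v_1$, which degenerates to the full loxodrome.

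\textbf{Step 4: read off the conclusions.} The junction is at $V=v_0+L$, and the minimal resistance is $\Delta v-L/2$, which is \eqref{rv}. Substituting $s=L$ into \eqref{k1} gives slope $k=L/L=1$.

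There is no genuine obstacle in this argument. The only care needed is to use $\Phi$ and \eqref{16} so that the loxodromic piece has constant $y'=k$; this makes the result independent of $f$.
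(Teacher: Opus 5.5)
Your proof is correct and follows the paper's route: you split the resistance at $v_c$ into the loxodromic contribution $s/(1+k^2)$ and the parallel contribution $\Delta v - s$, substitute $k=L/s$, and optimize in the single variable $s=v_c-v_0$, which gives $V=v_0+L$, $\R=\Delta v-L/2$ and $k=1$. Your sign analysis of $g'(s)$ (increasing on $(0,L)$, decreasing afterwards) justifies global minimality on $(v_0,v_1)$ slightly more cleanly than the paper's second-derivative check $\R''[V]=1/(2L)>0$, which on its own only certifies a local minimum.
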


\begin{proof} Notice that $u'\geq 0$ in the domain of $\gamma_{truc}^{v_c}$ because $u_0<u_1$. Using the change of variables $y(v) = \Phi(u(v))$, the resistance functional for a truncated curve is the sum of the resistance of the loxodromic part   and the circular part. Thus, from \eqref{r5}, we have
\begin{equation*}
\R[v_c] = \int_{v_0}^{v_c} \frac{1}{1 + k^2}\,dv + \int_{v_c}^{v_1} 1\, dv = \frac{v_c - v_0}{1 + k^2} + (v_1 - v_c).
\end{equation*}
Substituting $k = L / (v_c - v_0)$ because of \eqref{k1}, the resistance takes the form 
\begin{equation*}
\R[v_c] = \frac{(v_c - v_0)^3}{(v_c - v_0)^2 + L^2} + v_1 - v_c.
\end{equation*}
To find the value $v_c$ that minimizes the total resistance, we differentiate $\R[v_c]$ with respect to $v_c$ and set it to zero. We have
\begin{equation*}
\R'[v_c] = -L^2\frac{ (v_c-v_0)^2-L^2}{\left(L^2+(v_c-v_0)^2\right)^2}. 
\end{equation*}
Setting $\R'[v_c]=0$ implies $v_c=v_0\pm L$. Since $v_c\in (v_0,v_1)$, and by the hypothesis $\Delta v > L$, the only valid critical point is $V=v_0+L$. It is easy to check that $\R''[V]=\frac{1}{2L}>0$, which proves that $\gamma_{trunc}^V$ determines 
  the global minimum of $\R[v_c]$ in the interval $(v_0,v_1)$. The computation of the resistance of $\gamma_{trunc}^{v_c}$ is straightforward yielding \eqref{rv}. Finally, from \eqref{k1}, we have $k=1$ at $V$.
 
\end{proof}

 \begin{corollary} For given points $A = (u_0, v_0)$ and $B = (u_1, v_1)$ in geodesic coordinates, with $u_0<u_1$ and $v_0<v_1$, if $\Delta v > \Phi(u_1)-\Phi(u_0)$, then the (smooth) loxodrome connecting $A$ and $B$   is not the global minimizer of the resistance in the class of functions $\mathcal{C}_{0,1}^m$
\end{corollary}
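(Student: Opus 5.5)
The approach is a direct comparison: the smooth loxodrome from $A$ to $B$ has a larger resistance than the optimal truncated loxodrome $\gamma_{trunc}^V$ of Theorem \ref{t51}, and the latter lies in $\mathcal{C}_{0,1}^m$.

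First I compute the resistance of the smooth loxodrome joining $A$ and $B$. Writing it as $y=\Phi(u(v))$ with $y'=k$ constant, the boundary conditions force $k(v_1-v_0)=\Phi(u_1)-\Phi(u_0)=L$, so $k=L/\Delta v$. By \eqref{r5},
$$\R[\gamma_{lox}]=\frac{\Delta v}{1+L^2/\Delta v^2}=\frac{\Delta v^3}{\Delta v^2+L^2}.$$
This curve belongs to $\mathcal{C}_{0,1}^m$, since $u'=kf(u)>0$.

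Next I check that $\gamma_{trunc}^V$ is admissible. Its loxodromic part has $u'\ge 0$ and its circular part has $u'=0$. It is continuous and piecewise $C^1$, satisfies $u(v_0)=u_0$ and $u(v_1)=u_1$, and $V=v_0+L\in(v_0,v_1)$ precisely because $\Delta v>L$. Hence $\gamma_{trunc}^V\in\mathcal{C}_{0,1}^m$, and by \eqref{rv} its resistance is $\Delta v-L/2$.

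It then suffices to prove the strict inequality
$$\frac{\Delta v^3}{\Delta v^2+L^2}>\Delta v-\frac{L}{2}.$$
Multiplying by $\Delta v^2+L^2>0$ and expanding, this is equivalent to
$$0>-\Delta v L^2+\tfrac{L}{2}\Delta v^2+\tfrac{L^3}{2},$$
that is, $\Delta v^2-2L\Delta v+L^2>0$, or $(\Delta v-L)^2>0$. This holds since $\Delta v\neq L$.

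Alternatively, the smooth loxodrome is the limit case $v_c\to v_1$ of the family $\R[v_c]$ in the proof of Theorem \ref{t51}. That function has a unique critical point at $V<v_1$, which is a strict minimum, so $\R[v_1^-]>\R[V]$.

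The only delicate point is confirming that the truncated curve really lies in the class of admissible functions, namely its monotonicity and the requirement $V<v_1$. The algebra itself is immediate.
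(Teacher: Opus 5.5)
Your proof is correct and follows the argument the paper leaves implicit after Theorem \ref{t51}: the smooth loxodrome is the endpoint case $v_c=v_1$ of $\R[v_c]$, whose strict minimum is $\gamma_{trunc}^V$, and reducing $\frac{\Delta v^3}{\Delta v^2+L^2}>\Delta v-\frac{L}{2}$ to $(\Delta v-L)^2>0$ makes this comparison explicit. One sign slip: after expanding, the intermediate inequality should read $0>\Delta v L^2-\frac{L}{2}\Delta v^2-\frac{L^3}{2}$; you wrote its negative, which would say $\frac{L}{2}(\Delta v-L)^2<0$, although your final condition $(\Delta v-L)^2>0$ is correct.
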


 \begin{figure}[h!t]
\centering
\includegraphics[width=0.55\linewidth]{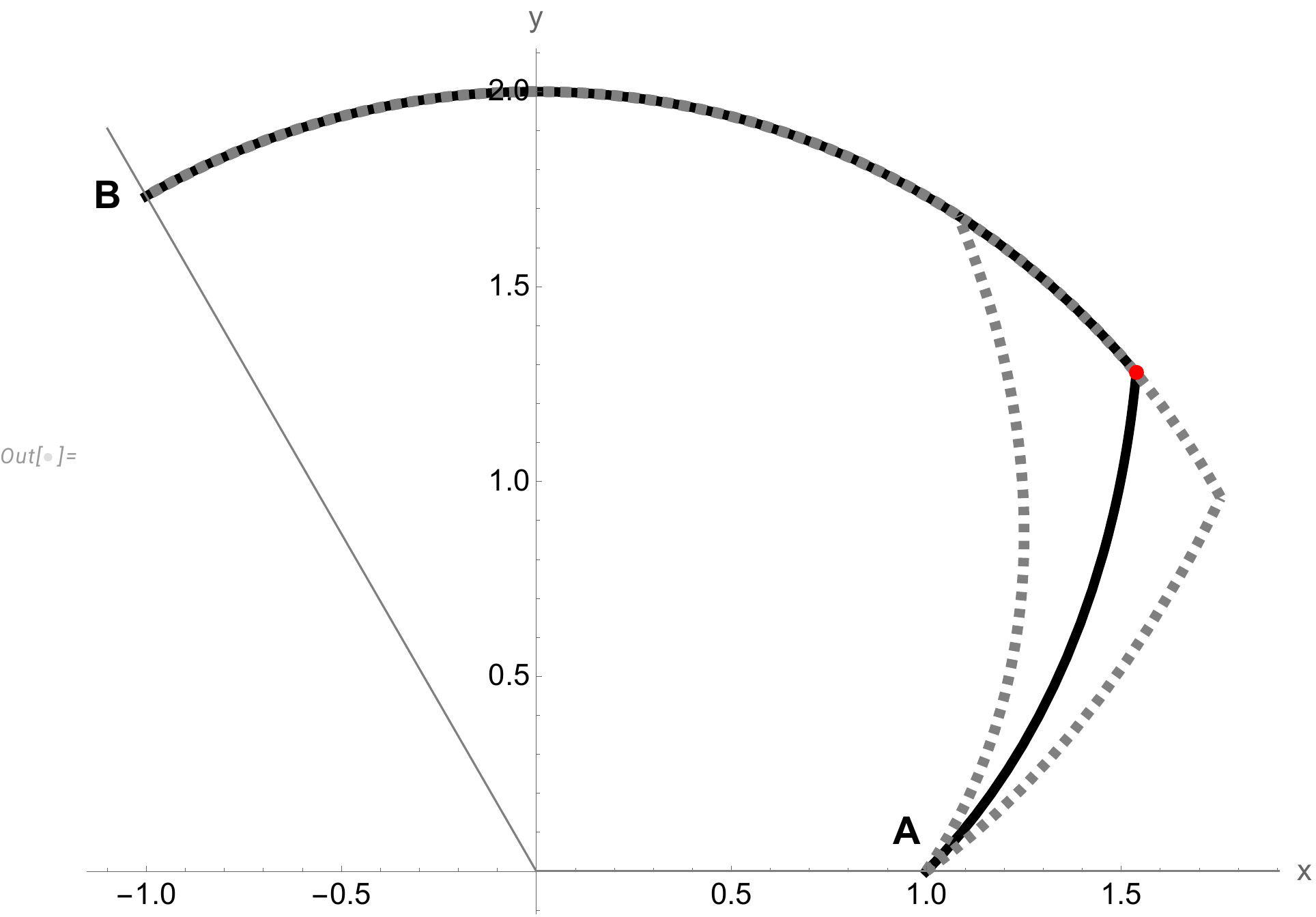}\quad \includegraphics[width=0.4\linewidth]{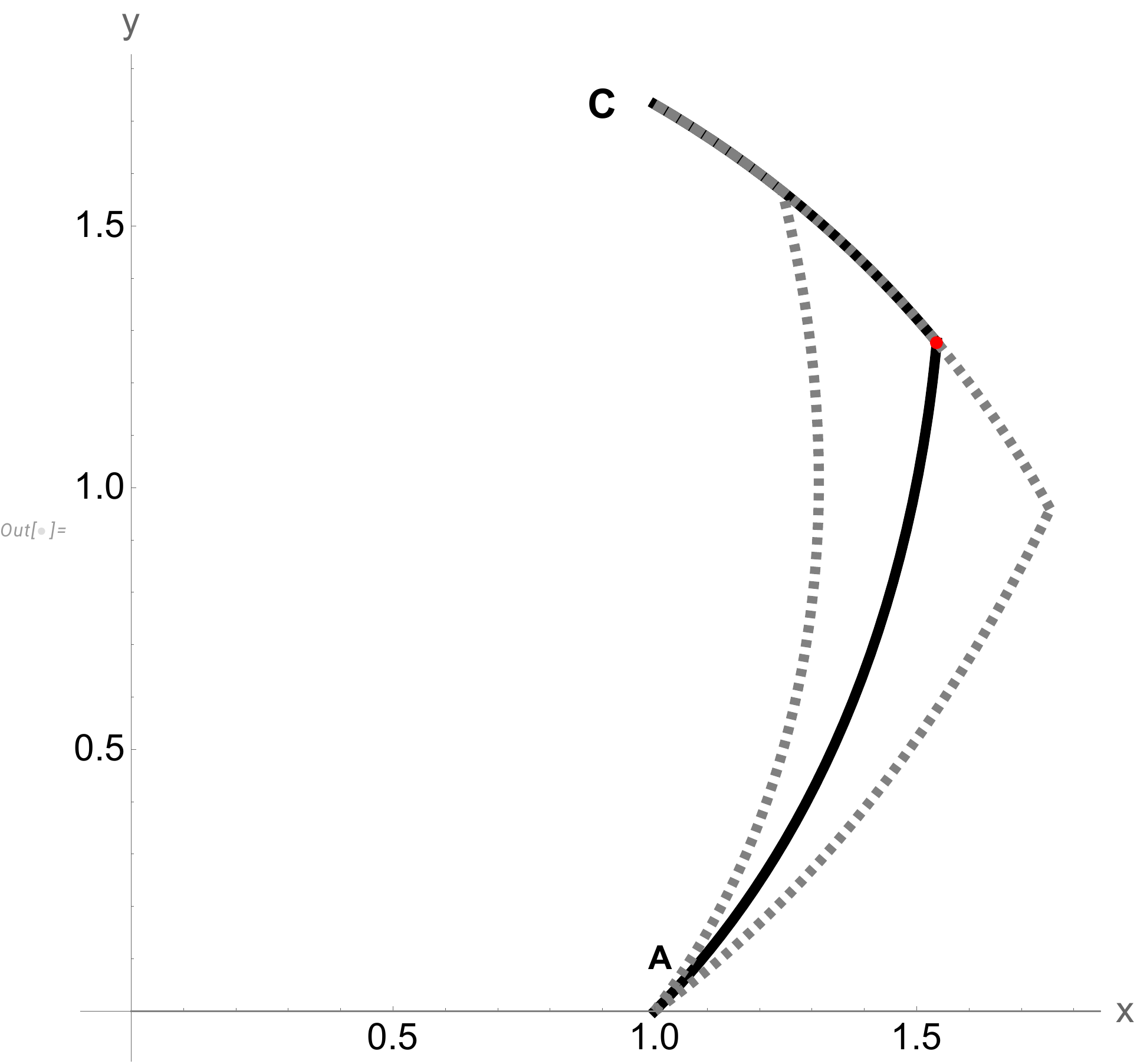}
\caption{Truncated loxodromes in $\r^2$. The endpoints in polar coordinates are $A=(1,0)$ and $B=(2,\frac{2\pi}{3})$ (left) and $C=(2,\pi/3)$ (right). Dashed truncated loxodromes for different values of $v_c$. The ideal truncated loxodrome (black) occurs when $V=L\approx 0.6931$.}
\label{fig2}
\end{figure}

 \begin{remark}
A consequence of Theorem \ref{t51} is that the optimal angular amplitude for the loxodromic part, $V -v_0=L$, depends only on the initial and final radii, $u_0$ and $u_1$, and it is independent of the angular amplitude of the domain $\Delta v$, provided the admissibility condition $\Delta v > L$ is satisfied. Physically, this implies that the optimal aerodynamic front achieves the required radial expansion (from $u_0$ to $u_1$) through a unique, fixed loxodrome. Once the loxodrome attains the radius $u_1$ at $v_c=V$, the optimal front is completed by  a circular arc of constant radius $u_1$ over the remaining interval $[V,v_1]$. 
\end{remark}

\section{Broken extremals and local minima}\label{s5}

In this section, we investigate the existence of broken extremals, that is, piecewise $C^1$-curves with corners, and their role as potential minimizers. We need to distinguish whether the corner occurs in the interior of the admissible set $\mathcal{C}_{0,1}^m$ because in such a case, it must satisfy the classical Weierstrass-Erdmann conditions. In contrast, if the corner occurs when $u'=0$, the situation is different. We begin by characterizing the case where the corners occur when $u'\not=0$.

\begin{theorem}\label{t41} 
Let $\gamma$ be a piecewise $C^1$ extremal for the resistance functional, parameterized as $u=u(v)$. Suppose $u'(v_0^-)=p$ and $u'(v_0^+)=q$ are non-zero.  Then $\gamma$ presents a corner at $v_0$ if and only if 
\begin{equation}\label{we}
p\cdot q = \frac{1}{3}f(u(v_0))^2.
\end{equation}
\end{theorem}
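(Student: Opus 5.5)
The plan is to apply the classical Weierstrass--Erdmann corner conditions to the Lagrangian of \eqref{r11}, namely $\mathcal{L}(u,u')=\frac{f(u)^2}{u'^2+f(u)^2}$. At a corner $v_0$ where both one-sided slopes $p=u'(v_0^-)$ and $q=u'(v_0^+)$ are nonzero, the corner lies in the interior of the admissible set. Hence a broken extremal must satisfy continuity across $v_0$ of the momentum $\mathcal{L}_{u'}$ and of the Hamiltonian $\mathcal{L}-u'\mathcal{L}_{u'}$. A direct computation gives
\[
\mathcal{L}_{u'}=-\frac{2u'f^2}{(u'^2+f^2)^2},\qquad \mathcal{L}-u'\mathcal{L}_{u'}=\frac{f^2(3u'^2+f^2)}{(u'^2+f^2)^2}.
\]
The second expression is exactly the first integral \eqref{c1} written in the parametrization $u=u(v)$.

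Since $u(v_0)$ is the same on both sides, I will normalize by setting $s=p/f(u(v_0))$ and $t=q/f(u(v_0))$. The two corner conditions then become
\[
\frac{s}{(1+s^2)^2}=\frac{t}{(1+t^2)^2},\qquad \frac{1+3s^2}{(1+s^2)^2}=\frac{1+3t^2}{(1+t^2)^2}.
\]
A genuine corner means $s\neq t$, so the trivial factor can be removed from each equation.

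\textbf{First equation.} Cross-multiplying and factoring out $(s-t)$ leaves
\[
1-2st-st(s^2+st+t^2)=0 .
\]
This equation also shows that $s$ and $t$ have the same sign.

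\textbf{Second equation.} With $a=1+s^2$ and $b=1+t^2$, clearing denominators gives $3ab(b-a)=2(b^2-a^2)$. If $a=b$ then $s=\pm t$, and the same-sign remark forces $s=t$, which is not a corner. So $a\neq b$, and dividing out $b-a$ gives
\[
3ab=2(a+b), \qquad\text{that is,}\qquad 3s^2t^2+s^2+t^2=1 .
\]

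\textbf{Combining.} The main step is to combine these two symmetric relations. I will rewrite both in terms of $\sigma=s^2+t^2$ and $\pi=st$. The second relation gives $\sigma=1-3\pi^2$. Substituting this into the first gives
\[
1-2\pi-\pi(1-3\pi^2+\pi)=0,
\qquad\text{i.e.}\qquad (3\pi-1)(\pi^2+\ldots)=0 ,
\]
where the factorization is to be checked when written out. The aim is to isolate $\pi=\tfrac13$, which is $pq=\tfrac13 f(u(v_0))^2$, i.e. \eqref{we}; the remaining factor is to be shown to have no admissible real root. For the converse direction I will substitute $t=1/(3s)$ back into both conditions and verify that they hold.

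\textbf{Expected obstacle.} I expect the hard part to be this algebra, and in particular the converse. Substituting $t=1/(3s)$ into the momentum condition reduces it to $27x(1+x)^2=(9x+1)^2$ with $x=s^2$, which is $(3x-1)^3=0$. So the Weierstrass--Erdmann system pins both slopes at the critical value $|u'|=f/\sqrt3$, the boundary of the Legendre region in Proposition \ref{pr37}, i.e. the $\pi/3$ loxodrome of Theorem \ref{t36}.

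I would therefore present \eqref{we} as the necessary and sufficient relation that the corner conditions impose on the pair $(p,q)$. I would also remark explicitly that this forces the one-sided slopes to the degenerate value $f/\sqrt3$, where the first factor of \eqref{eq1} vanishes. This remark is consistent with the later claim that such corners cannot give strong minima.
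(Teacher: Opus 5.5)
Your Weierstrass--Erdmann setup is the paper's, continuity of $\mathcal{L}_{u'}$ and of $\mathcal{L}-u'\mathcal{L}_{u'}$ at $v_0$, and your algebra is correct. Both cancellations are legitimate, the same-sign remark holds, and your cubic in $st$ factors as $(3st-1)\bigl((st)^2-1\bigr)$. The roots $st=\pm1$ are excluded because they give $s^2+t^2=-2$. (The paper reaches $st=\frac13$ in one line by dividing the Hamiltonian condition by the momentum condition, which gives $f^2/p+3p=f^2/q+3q$.)

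The gap is that you never test $st=\frac13$ against the hypothesis $s\neq t$ that you used to cancel factors. Your second relation then gives $s^2+t^2=1-3(st)^2=\frac23=2st$, so $(s-t)^2=0$. Your ``expected obstacle'' computation says the same thing: $(3x-1)^3=0$ forces $s^2=\frac13$, and hence $t=1/(3s)=s$. So the corner conditions have no solution with $p\neq q$ and $p,q\neq0$. In other words, a piecewise $C^1$ extremal has no corner at any point where both one-sided slopes are nonzero.

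Geometrically, the two conditions say that the tangent lines to the graph of $h(w)=f^2/(f^2+w^2)$ at $w=p$ and $w=q$ coincide. If $p,q$ have opposite signs, then $h'(p)$ and $h'(q)$ have opposite signs, so the lines differ. If $p<q$ have the same sign, Rolle's theorem would require two zeros of $h''$ in $(p,q)$, but on each half-line $h''$ vanishes only at $\pm f/\sqrt3$.

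Hence the converse you planned cannot be carried out, and presenting \eqref{we} as ``necessary and sufficient'' is wrong. The pair $p=f$, $q=f/3$ satisfies \eqref{we} but violates continuity of $\mathcal{L}_{u'}$. The only slopes satisfying both the corner conditions and \eqref{we} are $p=q=\pm f/\sqrt3$, which is not a corner. For instance, the smooth loxodrome $u'=f(u)/\sqrt3$ of Theorem~\ref{t36} satisfies \eqref{we} at every point. So the ``only if'' half of the statement holds only vacuously, and the ``if'' half is false as written.

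The paper's proof does not establish the converse either. Dividing the two conditions keeps only their ratio, which is exactly what yields \eqref{we}, and it discards the momentum condition, which together with \eqref{we} rules out the corner. What your computation actually proves is the stronger statement that no such corner exists. That statement also makes Corollary~\ref{co52} immediate, without the Legendre argument.
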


\begin{proof}
  The Lagrangian of \eqref{r11} is 
$\mathcal{L}=\frac{f^2}{ f^2+u'^2}$. By the Weierstrass-Erdmann corner conditions, the functions $\frac{\partial \mathcal{L}}{\partial u'}$ and $H = \mathcal{L} - u'\frac{\partial \mathcal{L}}{\partial u'}$ must be continuous at $v_0$. 
Computing  these quantities, we obtain
$$\frac{\partial \mathcal{L}}{\partial u'} = \frac{-2f(u)^2 u'}{(f(u)^2 + u'^2)^2}, \qquad H = \frac{f(u)^2 \left( f(u)^2 + 3u'^2 \right)}{(f(u)^2 + u'^2)^2}.$$
 The continuity conditions at $v_0$ require
$$\frac{-2f(u)^2 p}{(f(u)^2 + p^2)^2} = \frac{-2f(u)^2 q}{(f(u)^2 + q^2)^2}, \quad \mbox{and}\quad 
\frac{f(u)^2 \left( f(u)^2 + 3p^2 \right)}{(f(u)^2 + p^2)^2} = \frac{f(u)^2 \left( f(u)^2 + 3q^2 \right)}{(f(u)^2 + q^2)^2}.$$
Since $p, q \neq 0$, dividing the second equation by the first eliminates the denominators, leading to $ \frac{f(u)^2}{p} + 3p = \frac{f(u)^2}{q} + 3q,$ which simplifies to   \eqref{we}.
\end{proof}

As a consequence of this result, and because $f>0$, an internal corner occurs if both slopes have the same sign. We come back to the     admissible class  $\mathcal{C}_{0,1}^m$.

 \begin{corollary} \label{co52}
 Broken extremals   cannot be local minimizers of the resistance problem in interior of the  admissible class   $\mathcal{C}_{0,1}^m$. That is, if a minimizer satisfies $u'(v_2)>0$ at $v=v_2$ on both sides of a point $v_2$, then it must be smooth at $v_2$. 
\end{corollary}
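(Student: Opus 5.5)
Suppose a local minimizer in $\mathcal{C}_{0,1}^m$ has a corner at an interior point $v_2$ with one-sided slopes $p=u'(v_2^-)>0$ and $q=u'(v_2^+)>0$, $p\neq q$. Then $u'$ is strictly positive on a neighborhood of $v_2$. So every small perturbation $u+\varepsilon\eta$, with $\eta$ piecewise $C^1$ supported in that neighborhood and vanishing at its ends, stays in $\mathcal{C}_{0,1}^m$ for $|\varepsilon|$ small. The monotonicity constraint is therefore not active near $v_2$, and the usual two-sided variational arguments apply there.

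The plan has three steps.

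First, use these two-sided perturbations to derive the Weierstrass--Erdmann corner conditions for the Lagrangian $\mathcal{L}=f^2/(f^2+u'^2)$ of \eqref{r11}. Continuity of $\partial\mathcal{L}/\partial u'$ comes from variations in $u$. Continuity of $H=\mathcal{L}-u'\,\partial\mathcal{L}/\partial u'$ comes from variations that slide the corner location, again admissible because monotonicity is strict near $v_2$. Theorem \ref{t41} then gives $pq=\tfrac13 f(u(v_2))^2$. Moreover the minimizer is an extremal on each side, so by Proposition \ref{pr37} it satisfies the strict Legendre condition on each side.

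Second, bring in the Weierstrass necessary condition, which holds at a corner in the one-sided limits. To get the contradiction I compare $3p^2$ and $3q^2$ with $f^2$, writing $f=f(u(v_2))$. Proposition \ref{pr37} gives $3u'^2\ge f^2$ on each side; passing to the limit gives $3p^2\ge f^2$ and $3q^2\ge f^2$, so $9p^2q^2\ge f^4$. The corner relation forces $9p^2q^2=f^4$, so equality holds: $3p^2=3q^2=f^2$. Since $p,q>0$, this gives $p=q=f/\sqrt3$, so there is no corner, contradicting $p\neq q$.

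The main obstacle is the limiting step. Proposition \ref{pr37} is stated as a strict pointwise inequality along a weak local minimum, and I need it up to the corner. Continuity of $u'$ on each closed subinterval gives only the non-strict limit, but the non-strict version is exactly what the argument uses, so this should be harmless. I must also check that Proposition \ref{pr37} applies to each smooth piece: restricting to a subinterval preserves local minimality, since any perturbation supported inside the subinterval is an admissible perturbation of the whole curve. If that application looks doubtful, I would fall back on a direct Weierstrass excess computation. The excess $E(u,p,q)=\mathcal{L}(q)-\mathcal{L}(p)-(q-p)\mathcal{L}_{u'}(p)$ must be nonnegative for admissible $q>0$. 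Substituting $pq=f^2/3$, I would show that $E<0$ for one of the two slopes, which gives the same contradiction.
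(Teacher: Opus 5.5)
Your argument is correct and is essentially the paper's proof. It combines the corner relation $pq=\tfrac13 f(u(v_2))^2$ from Theorem \ref{t41} with the one-sided Legendre inequalities $3p^2\ge f^2$ and $3q^2\ge f^2$ to force $3p^2=3q^2=f^2$, hence $p=q$. You also spell out two points the paper leaves implicit: strict monotonicity near $v_2$ makes two-sided perturbations admissible, and only the non-strict limiting form of Proposition \ref{pr37} is needed.
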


\begin{proof}
Suppose  $\gamma \in \mathcal{C}_{0,1}^m$ is a minimizer with a corner at $v_2$. This requires $p\cdot q=\frac13 f(u(v_2))^2$. According to Proposition \ref{pr37},  for $\gamma$ to be a local minimizer, the smooth arcs must satisfy the Legendre necessary conditions $3p^2\geq f(u(v_2))^2$ and    $3q^2 \ge f(u(v_2))^2$. Multiplying these   yields 
$9 p^2 q^2 \ge f(u(v_2))^4$. The only way to satisfy this along with the condition $p\cdot q>0$ is that  $3p^2 = f(u(v_2))^2$ and $3q^2 = f(u(v_2))^2$. This implies $p = q$, contradicting the existence of a corner. 
\end{proof}

\begin{theorem}\label{t42}
Let $u(v) \in \mathcal{C}_{0,1}^m$ be an extremal connecting $A$ and $B$. If the slope of the extremal satisfies $u'(v) < f(u(v))$ at any point, then the variational problem admits no strong local minimizer among smooth extremals  in the class $\mathcal{C}_{0,1}^m$. 
\end{theorem}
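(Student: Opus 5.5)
The plan is to use the Weierstrass necessary condition for a strong minimum, written in the flattened variable $y=\Phi(u)$ introduced before Theorem \ref{t51}. In that variable the resistance is \eqref{16}, with Lagrangian $\mathcal{L}(y')=\frac{1}{1+y'^2}$, which depends only on $y'$. The monotonicity constraint $u'\ge 0$ becomes $y'\ge 0$. The hypothesis $u'<f(u)$ becomes $y'<1$ at the point in question.

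\textbf{Step 1: reduce to loxodromes.} A smooth extremal in $\mathcal{C}_{0,1}^m$ with $u_0<u_1$ cannot be a parallel, since then $u_0=u_1$. It cannot be a meridian either, since it is a graph $u=u(v)$. So by Theorem \ref{t1} and Theorem \ref{t36} it is a loxodrome, and $y'\equiv k$ is constant. The pointwise hypothesis $u'<f(u)$ at one point therefore gives $k<1$ along the whole extremal.

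\textbf{Step 2: the excess function.} Compute the Weierstrass excess function
$$E(k,w)=\mathcal{L}(w)-\mathcal{L}(k)-(w-k)\mathcal{L}'(k),\qquad \mathcal{L}'(k)=\frac{-2k}{(1+k^2)^2},$$
and evaluate it at the admissible comparison slope $w=0$, which represents a flat, parallel-type piece allowed by $y'\ge0$. A short computation gives
$$E(k,0)=1-\frac{1}{1+k^2}-\frac{2k^2}{(1+k^2)^2}=\frac{k^2(k^2-1)}{(1+k^2)^2},$$
which is strictly negative for $0<k<1$. Since a strong local minimum requires $E(k,w)\ge 0$ for all admissible $w$, this already rules one out.

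\textbf{Step 3: explicit needle variation.} For a self-contained argument I will build the variation directly. On a short interval $[a,a+2\varepsilon]$, replace the line $y=kv+c$ by a flat piece of slope $0$ on $[a,a+\varepsilon]$. Follow it by a piece of slope $2k$ on $[a+\varepsilon,a+2\varepsilon]$, so the curve rejoins the loxodrome. The new function stays in $\mathcal{C}_{0,1}^m$: it is monotone, keeps the endpoints, and stays between $u_0$ and $u_1$. It is $O(\varepsilon)$-close in $C^0$ but not in $C^1$. The change in resistance is
$$\varepsilon\Big[1+\frac{1}{1+4k^2}-\frac{2}{1+k^2}\Big],$$
which is negative for small $k$. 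To get the sign for every $k<1$, I will instead use an asymmetric needle: slope $0$ on a length $\varepsilon\delta$, compensated by slope $k+O(\delta)$ on a length $\varepsilon$. The first-order change in resistance is then $\varepsilon\delta\,E(k,0)+O(\varepsilon\delta^2)$, which is negative for $\delta$ small. Since $\varepsilon$ is arbitrary, every $C^0$-neighbourhood contains curves of smaller resistance, so the loxodrome is not a strong minimizer.

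\textbf{Main obstacle.} I expect the hardest part to be the bookkeeping in Step 3. The compensating slope must be chosen to match the endpoint, and the first-order expansion must be checked to reproduce exactly the excess function $E(k,0)$. I also need to confirm that admissibility (monotone, $u_0\le u\le u_1$) is not violated near the endpoints $v_0,v_1$; placing the needle in the interior avoids this.
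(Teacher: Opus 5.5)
Your proposal is correct and follows essentially the paper's argument: the paper also shows that the Weierstrass condition fails by using comparison slopes $q$ at or near $0$, and its $\mathcal{E}(u,p,0)=\frac{p^2(p^2-f(u)^2)}{(f(u)^2+p^2)^2}$ is exactly your $E(k,0)=\frac{k^2(k^2-1)}{(1+k^2)^2}$ with $k=p/f(u)$, i.e.\ after passing to $y=\Phi(u)$. Your Step~3 needle variation (exact change $\varepsilon[\delta+g(k(1+\delta))-(1+\delta)g(k)]$ with $g(p)=\frac{1}{1+p^2}$, whose $\delta$-derivative at $0$ is $E(k,0)<0$) is a correct, self-contained replacement for citing the Weierstrass necessary condition under the constraint $y'\ge0$; the only thing to tidy is the justification in Step~1, since Theorems~\ref{t1} and~\ref{t36} treat only the case where one factor vanishes identically, whereas the clean reason is that in $y$ the Euler--Lagrange equation reads $(3y'^2-1)y''=0$, which forces $y''\equiv0$ for a $C^2$ extremal.
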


\begin{proof}
We evaluate  the Weierstrass excess function $\mathcal{E}(u, p, q)$ defined by 
\begin{equation*}
\mathcal{E}(u, p, q) = \mathcal{L}(u, q) - \mathcal{L}(u, p) - (q - p)\frac{\partial \mathcal{L}}{\partial u'}(u, p).
\end{equation*}
For a strong minimum, we need  $\mathcal{E}\geq 0$ for all test slopes $p,q\geq 0$.  Computing $\mathcal{E}$, we obtain
\begin{equation*}
\begin{split}
\mathcal{E}(u, p, q) &= \frac{f(u)^2}{f(u)^2 + q^2} - \frac{f(u)^2}{f(u)^2 + p^2} + (q - p)\frac{2f(u)^2 p}{(f(u)^2 + p^2)^2}\\
&= \frac{f(u)^2 (p - q)^2}{(f(u)^2 + q^2)(f(u)^2 + p^2)^2} \Big( p^2 + 2pq - f(u)^2 \Big).
\end{split}
\end{equation*}
The sign of $\mathcal{E}(u, p, q)$ depends on the polynomial $S(p,q) = 2pq + p^2 - f(u)^2$. At $q = 0$, we have $S(p,0) = p^2 - f(u)^2$. By hypothesis, we have  $S(p,0)<0$. Consequently, $S(p,q)<0$ if $q$ is sufficiently close to zero, proving that  the extremal does not satisfy the Weierstrass condition.
\end{proof} 
 
 This theorem and Corollary \ref{co52} suggest that if a (smooth) loxodrome  fails to be a strong local minimizer, it is because its slope $u'$ falls below the value $f(u)$.    As we shall see in Section \ref{s6}, this lack of strong minimality for low  slopes   justifies why we focus on truncated loxodromes as candidates to the global minimizer when the optimal solution is not smooth.

To establish sufficiency for a weak local minimum, we must verify the Jacobi condition, which asserts that the extremal must not contain any points conjugate to the initial point.  

\begin{lemma} 
Let $A= (u_0, v_0)$ be a fixed initial point in $\Sigma$ and consider the family of loxodromic extremals $\mathcal{F} = \{ \gamma_k \}_{k \in \mathbb{R}^+}$ starting from $A$ and where $\gamma_k$ are defined in geodesic coordinates $w_k=w_k(u)$ by 
\begin{equation} \label{field_eq}
w_k(u) = v_0 + \frac{1}{k} G(u), \quad \text{where} \quad G(u) = \int_{u_0}^{u} \frac{ds}{f(s)}.
\end{equation}
Then, for any $u > u_0$, the family $\mathcal{F}$ constitutes a central field of extremals covering the domain $\mathcal{D} = \{ (u, v) \in \Sigma : u > u_0, v > v_0 \}$.
\end{lemma}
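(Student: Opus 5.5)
The plan is to show three things. Each $\gamma_k$ is an extremal. The curves pass through the common point $A$, so the field is central. Finally, every point of $\mathcal{D}$ lies on exactly one $\gamma_k$, and the map $(u,k)\mapsto (u,w_k(u))$ is a diffeomorphism from $(u_0,\infty)\times(0,\infty)$ onto $\mathcal{D}$. I will use the description of loxodromes from Theorem \ref{t1} and the change of variables $\Phi$. In the notation of the lemma, $G(u)=\Phi(u)-\Phi(u_0)$.

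\emph{Step 1 (extremality and centrality).} Differentiating \eqref{field_eq} gives $w_k'(u)=\frac{1}{kf(u)}$. Viewing the curve as $u=u(v)$, this is $u'=kf(u)$, which is exactly the loxodrome ODE \eqref{lo1} with $v'=1$. By Theorem \ref{t1}, each $\gamma_k$ is therefore a smooth extremal with $u'>0$. Since $G(u_0)=0$, every $\gamma_k$ passes through $A=(u_0,v_0)$, so the family is a pencil of extremals emanating from $A$.

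\emph{Step 2 (covering).} Fix $(u,v)\in\mathcal{D}$. Because $f>0$, $G$ is strictly increasing, so $G(u)>0$ for $u>u_0$. The equation $v=v_0+G(u)/k$ then has the unique positive solution
\[
k=\frac{G(u)}{v-v_0}.
\]
This gives existence and uniqueness of the extremal through each point of $\mathcal{D}$. Conversely, every $\gamma_k$ with $u>u_0$ lies in $\mathcal{D}$, since there $w_k(u)>v_0$.

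\emph{Step 3 (non-degeneracy, the Jacobi-type check).} For a genuine field, the slope function must be $C^1$ and the Jacobian must not vanish. The map $(u,k)\mapsto(u,v_0+G(u)/k)$ has Jacobian determinant
\[
\frac{\partial w_k}{\partial k}=-\frac{G(u)}{k^2},
\]
which is nonzero for $u>u_0$. Hence the inverse $k(u,v)=G(u)/(v-v_0)$ is smooth on $\mathcal{D}$. The field slope is
\[
p(u,v)=k(u,v)\,f(u)=\frac{G(u)f(u)}{v-v_0},
\]
which is smooth as well. The vanishing of $\partial w_k/\partial k$ is precisely the equation that would define conjugate points to $A$ along $\gamma_k$. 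Since it vanishes only at $u=u_0$, no $\gamma_k$ carries a point conjugate to $A$ in $\mathcal{D}$.

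The main obstacle is the degenerate behaviour at the centre $A$ itself, where all curves meet and the Jacobian is zero. This is the standard situation for a central field, and I would handle it by restricting the statement to $u>u_0$, as the lemma does. A secondary point is that $\mathcal{D}$ is a coordinate domain. If $\Psi$ is periodic in $v$, as in the space forms, the covering must be understood in the $(u,v)$ coordinates and not on the surface itself, and I would remark on this explicitly.
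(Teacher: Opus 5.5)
Your proposal is correct and follows essentially the same route as the paper: solve $v=v_0+G(u)/k$ for the unique $k=G(u)/(v-v_0)>0$ to get existence and uniqueness, then check that the Jacobian determinant $-G(u)/k^2$ of $(u,k)\mapsto(u,v)$ is nonzero on $\mathcal{D}$. Your additions go slightly beyond the paper: you verify directly that each $\gamma_k$ satisfies the loxodrome equation, and you write down the explicit smooth inverse $k(u,v)$ and the slope function, which gives a global diffeomorphism onto $\mathcal{D}$ rather than only the local one the paper obtains from the inverse function theorem.
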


\begin{proof}
To prove that $\mathcal{F}$ forms a central field, we must verify that all $\gamma_k$ cover $\mathcal{D}$ without intersecting (existence and uniqueness), and that the coordinate transformation $(u, k) \mapsto (u, v)$ is smooth and non-degenerate. Notice that $\mathcal{D}$ is simply connected because it is a quadrant of $\r^2$. 

First, we show that for any   $B=(u_1, v_1) \in \mathcal{D}$, there is exactly one extremal in $\mathcal{F}$ connecting $A$ to $B$. From \eqref{field_eq}, we have to solve   the equation $v_1 =w_k(u_1)= v_0 + \frac{1}{k} G(u_1)$. But this gives a unique solution $k = \frac{G(u_1)}{v_1 - v_0}$, which is positive. Thus, we have proved the existence and uniqueness of extremals. 

The non-degeneracy of  the mapping $(u, k) \mapsto (u, v)$ defined by \eqref{field_eq} is verified by computing its Jacobian matrix, which is  
\begin{equation*}
\frac{\partial (u, v)}{\partial (u, k)} = 
\begin{pmatrix} 
1 & 0 \\ 
\frac{1}{k f(u)} & -\frac{1}{k^2} G(u) 
\end{pmatrix}.
\end{equation*}
Since its determinant is non-zero in $\mathcal{D}$, the inverse function theorem ensures that the mapping is a local diffeomorphism, and the extremals form a smooth field. 
\end{proof}
 
 Having established that the family of loxodromes forms a valid central field 
 over the domain $\mathcal{D}$, we can now address the Jacobi necessary condition. 
 Since the central field guarantees the absence of an envelope, there are no conjugate points. This proves the following   sufficiency theorem.
 
 \begin{theorem} \label{t43}
Let $\gamma$ be a loxodrome connecting two points in $\mathcal{D}$. If $\gamma$ satisfies the strict Legendre condition $3u'^2>f(u)^2$, then   $\gamma$ is a weak local minimizer for the resistance functional.
\end{theorem}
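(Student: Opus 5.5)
The plan is to apply the classical sufficiency theorem for a weak minimum: a smooth extremal satisfying the strict Legendre condition and having no points conjugate to its initial point is a weak local minimizer. The cleanest way to carry this out is the substitution $y=\Phi(u)$, which turns the functional into \eqref{16}, namely $\int_{v_0}^{v_1}\frac{dv}{1+y'^2}$. This Lagrangian $\ell(p)=1/(1+p^2)$ depends only on the slope $p=y'$. The loxodrome \eqref{lo1} becomes the straight line $y=k(v-v_0)+\Phi(u_0)$, with $y'=k=u'/f(u)$. The condition $3u'^2>f(u)^2$ then reads $3k^2>1$. Since $\ell''(p)=2(3p^2-1)/(1+p^2)^3$, this is exactly the strict Legendre condition of Proposition \ref{pr37}.

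The key steps, in order, are as follows.

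\begin{enumerate}
\item Note that the variable change is harmless for weak neighbourhoods. Because $\Phi$ is a $C^1$ diffeomorphism with $\Phi'=1/f>0$, a $C^1$-small perturbation of $u$ with fixed endpoints corresponds to a $C^1$-small perturbation of $y$ with fixed endpoints, and conversely.
\item Build a field around the extremal. Embed the line $y_k$ in the central field $\mathcal{F}$ of the preceding Lemma, whose members are the lines through $(v_0,\Phi(u_0))$. Its slope function is $p(v,y)=(y-\Phi(u_0))/(v-v_0)$, defined on $\mathcal{D}$.
\item Use the Hilbert invariant integral. Because $\ell$ does not depend on $y$ or $v$, the integral $\int \ell(p)\,dv+\ell'(p)(dy-p\,dv)$ is path independent on the simply connected domain $\mathcal{D}$. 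This gives the Weierstrass representation
\[
\R[\tilde y]-\R[y_k]=\int_{v_0}^{v_1}\mathcal{E}\bigl(p(v,\tilde y),\tilde y'\bigr)\,dv .
\]
\item Show the excess is nonnegative. From the computation in Theorem \ref{t42}, after rescaling by $f$, the excess has the sign of $S=p^2+2pq-1$, multiplied by $(p-q)^2$ times a positive factor. On the field near $y_k$ the slopes $p$ stay close to $k$, so $3p^2>1$, and $\tilde y'=q$ stays close to $p$. Then $S\approx 3p^2-1>0$, so $\mathcal{E}\ge 0$, with equality only when $q=p$. This proves weak minimality, and even strictness.
\end{enumerate}

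I expect the main obstacle to be the behaviour at the initial point $A$, where the central field is singular: all lines pass through $(v_0,\Phi(u_0))$, so $\mathcal{D}$ excludes $A$ and the Jacobian in the Lemma degenerates at $G=0$. My first attempt is the standard device of shifting the centre of the pencil slightly to a point $(v_0-\varepsilon,\,\Phi(u_0)-\varepsilon k)$ on the extension of the line. This gives a proper (non-central) field that covers a neighbourhood of the whole closed arc. The smooth Jacobi condition then holds automatically, because the Jacobi equation for $\ell(y')$ is $(\ell''(k)h')'=0$, whose solutions $h=c(v-v_0)$ do not vanish again. Alternatively, one could avoid fields entirely. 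Since $\ell$ depends only on $p$ and is strictly convex for $3p^2>1$, Jensen's inequality together with the fixed mean slope $\frac{1}{\Delta v}\int y'\,dv=k$ gives $\R[\tilde y]\ge \Delta v\,\ell(k)$ whenever $\tilde y'$ ranges in an interval of convexity around $k$. I would include this as a short direct confirmation.
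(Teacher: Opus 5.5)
Your proposal is correct and follows the paper's route: the strict Legendre condition (which becomes $3k^2>1$ after the substitution $y=\Phi(u)$) together with the embedding of the loxodrome in the central field of loxodromes issuing from $A$. The difference is that you carry out the classical sufficiency argument explicitly, through the Hilbert invariant integral and the excess formula of Theorem~\ref{t42}, where the paper simply cites it. Your shifted pencil properly deals with the degeneracy of the paper's field at $A$, where the Jacobian determinant $-G(u)/k^2$ of the Lemma vanishes. Your Jensen remark, using convexity of $\ell$ on $[1/\sqrt{3},\infty)$ and the fixed mean slope $k$, is on its own a complete and shorter proof in the spirit of Section~\ref{s6}.
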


\begin{proof}
By the previous lemma, the loxodrome $\gamma$ is embedded in a central field of extremals $\mathcal{F}$ that   covers the simply connected domain $\mathcal{D}$ without forming an envelope. According to Jacobi's envelope theorem (see, e.g., \cite{gf}), the absence of such an envelope guarantees that the loxodrome contains no conjugate points relative to the initial point. Because $\gamma$ satisfies the strict Legendre condition and is free of conjugate points, it fulfills all classical sufficient conditions to provide a weak local minimum for the resistance functional.
\end{proof}

\begin{corollary}\label{co44}
Let $\gamma$ be a regular loxodromic extremal connecting two points $A$ and $B$. If the strict Legendre condition $3u'^2 > f(u)^2$ holds along $\gamma$, then $\gamma$ provides a strict weak local minimizer for the resistance functional, but it fails to be a strong local minimizer.
\end{corollary}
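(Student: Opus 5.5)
The statement has two halves, and I would prove them separately by assembling the results already established.

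For the positive part, that $\gamma$ is a strict weak local minimizer, I would invoke Theorem \ref{t43} directly. The loxodrome $\gamma$ through $A$ belongs to the central field $\mathcal{F}$ of the preceding lemma, so it has no conjugate points, and the strict Legendre condition $3u'^2>f(u)^2$ holds by hypothesis. Strictness then comes from the standard field-theoretic argument in the variable $y=\Phi(u)$. Along $\gamma$ the slope $y'=k$ is constant with $3k^2>1$. For a comparison curve $\tilde y$ with the same endpoints, write
$$\R[\tilde y]-\R[\gamma]=\int \mathcal{E}(y',\tilde y')\,dv.$$
Here $\mathcal{E}$ is the Weierstrass excess function from Theorem \ref{t42}; in the variable $y$ it becomes $\mathcal{E}(p,q)=\frac{(p-q)^2(p^2+2pq-1)}{(1+q^2)(1+p^2)^2}$, because $\mathcal{L}=1/(1+y'^2)$ is independent of $y$ and so the field slope is the constant $k$. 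For $q$ close to $p=k$ we have $p^2+2pq-1\approx 3k^2-1>0$. Hence $\mathcal{E}>0$ whenever $q\neq p$ and $|q-p|$ is small. It follows that $\R[\tilde y]>\R[\gamma]$ unless $\tilde y'\equiv k$, which forces $\tilde y=\gamma$. This is the strict weak minimum.

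For the negative part, I would show that $\gamma$ is not a strong minimizer by exhibiting $C^0$-close competitors in $\mathcal{C}^m_{0,1}$ with strictly smaller resistance. Theorem \ref{t42} only covers slopes $u'<f(u)$, that is $k<1$, so I would handle all $k>1/\sqrt3$ with the excess function at a large test slope instead. Take $p=k$ and $q\to\infty$. Then $\mathcal{E}(k,q)\to -\frac{1}{1+k^2}-q\,\frac{\partial\mathcal{L}}{\partial y'}(k)$, and since $\frac{\partial\mathcal{L}}{\partial y'}(k)=\frac{-2k}{(1+k^2)^2}<0$, this limit is $+\infty$. So large slopes do not help, and the test slope should go to zero. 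At $q=0$ the factor $p^2+2pq-1$ equals $k^2-1$, which is negative exactly when $k<1$.

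This is where I expect the real obstacle. For $k\ge 1$ with monotone variations ($q\ge0$), the sign of $\mathcal{E}$ is nonnegative, so a Weierstrass needle in the monotone class does not immediately break strong minimality. I would therefore try an explicit competitor. The plan is to replace a short subarc of $\gamma$ of angular length $\varepsilon$ by a flat piece (slope $0$, resistance density $1$) followed by a steeper loxodromic piece that catches up with $\gamma$, and then compute the resistance difference to first order in $\varepsilon$. If that difference is negative only when $k<1$, then the corollary as literally stated needs the hypothesis $u'<f(u)$ from Theorem \ref{t42}. In that case I would present the failure of strong minimality via Theorem \ref{t42} in the regime $1/\sqrt3<k<1$. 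For $k\ge1$, I would argue against the comparison class instead: compare with the global picture of Theorem \ref{t51}, or allow non-monotone needles ($q<0$), for which $p^2+2pq-1<0$ once $q<(1-k^2)/(2k)$. This gives $\mathcal{E}<0$ for every $k$ and hence no strong minimum in the class of all piecewise $C^1$ curves.
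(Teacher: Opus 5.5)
Your first half agrees with the paper, which simply combines Proposition~\ref{pr37} with Theorem~\ref{t43}. Your strictness argument is correct and does not need the field lemma or the Jacobi condition. Since $\int_{v_0}^{v_1}(\tilde y'-k)\,dv=0$ for every competitor with the same endpoints, the identity $\R[\tilde y]-\R[\gamma]=\int_{v_0}^{v_1}\mathcal{E}(k,\tilde y')\,dv$ holds directly. The field behind it is the family of translates $y=kv+c$, not the central field $\mathcal{F}$. Finally, $k^2+2kq-1>0$ for $q$ near $k$ precisely because $3k^2>1$.

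For the second half, the paper's proof is only the sentence that it follows from Theorem~\ref{t42}. For a loxodrome $y'=k$, that theorem's hypothesis $u'<f(u)$ means $k<1$, which is not implied by $3k^2>1$. So the obstacle you found is a real gap in the paper's argument, and you can resolve your hedge rather than leave it open.

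If $k\ge 1$, then $\Delta v\le L$, and Theorem~\ref{t61} shows that $\gamma$ is the unique \emph{global} minimizer in $\mathcal{C}_{0,1}^m$. It is therefore a strong local minimizer in that class. Two consequences for your plan:
\begin{itemize}
\item Your flat-plus-steep competitor cannot succeed for $k\ge1$. With $g(p)=1/(1+p^2)$ and $g^{**}$ its convex envelope, every monotone competitor satisfies $\R\ge\Delta v\,g^{**}(k)=\Delta v\,g(k)$.
\item The appeal to Theorem~\ref{t51} is misplaced, since that theorem assumes $\Delta v>L$, i.e.\ $k<1$.
\end{itemize}

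The statement is thus true only in a corrected form, and you should prove it in one of these two versions.
\begin{itemize}
\item In $\mathcal{C}_{0,1}^m$, a loxodrome satisfying the strict Legendre condition fails to be a strong local minimizer if and only if $k<1$. Theorem~\ref{t42} covers $1/\sqrt3<k<1$, and Theorem~\ref{t61} gives strong minimality for $k\ge1$.
\item If monotonicity is dropped, your needle with slope $q<(1-k^2)/(2k)$ gives $\mathcal{E}(k,q)<0$ and rules out strong minimality for every $k$. Place it at an interior point where $u_0<u<u_1$ so the bounds are respected.
\end{itemize}
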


\begin{proof}
The first statement holds because  the extremal satisfies the Legendre condition (Proposition \ref{pr37}) and the Jacobi condition (Theorem \ref{t43}). The second statement is a consequence of Theorem \ref{t42}.
\end{proof}

 \section{Global minimizers in the admissible class of monotonic curves}\label{s6}
 
 In this section, we determine the global minimizers of the resistance problem. The answer depends on     the angular amplitude between the two points. The   behavior of the minimizer is governed by the convexity of the integrand $g(p) =\frac{1}{1+p^2}$ in the expression \eqref{16} for the resistance.  Since $g$ is not convex for $p < 1$, smooth extremals with low slopes (less than $1$) cannot be global minimizers.  First, we consider the case where the  amplitude is small relative to the difference in the radii.  
  
 \begin{theorem} \label{t61}
 Let  $A=(u_0, v_0)$ and $B=(u_1, v_1)$ be two points with $u_0 < u_1$ and $v_0<v_1$. If the angular amplitude $\Delta v = v_1 - v_0$ satisfies 
\begin{equation}
\Delta v \le \Phi(u_1) - \Phi(u_0),
\end{equation}
then the unique global minimizer of $\mathcal{R}$ in the class of admissible monotonic curves $\mathcal{C}_{0,1}^{m}$ is the loxodrome $\gamma_{lox}$ defined in geodesic coordinates by 
$$\Phi(u(v)) = k(v-v_0)+\Phi(u_0),$$
 where 
$$k = \frac{\Phi(u_1) - \Phi(u_0)}{\Delta v}.$$
\end{theorem}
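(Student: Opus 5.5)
The plan is to pass to the variable $y(v)=\Phi(u(v))$. By \eqref{16}, the problem becomes minimizing
$$\int_{v_0}^{v_1} g(y')\,dv,\qquad g(p)=\frac{1}{1+p^2},$$
over piecewise $C^1$ functions $y$ with $y'\ge 0$, $y(v_0)=\Phi(u_0)$ and $y(v_1)=\Phi(u_1)$. This reformulation is legitimate because $\Phi$ is a smooth increasing bijection ($f>0$), so $u'\ge 0$ if and only if $y'\ge 0$. The constraint reduces to a single linear condition: $\int_{v_0}^{v_1} y'\,dv = L$, where $L=\Phi(u_1)-\Phi(u_0)$. The hypothesis $\Delta v\le L$ says exactly that the mean slope $k=L/\Delta v$ satisfies $k\ge 1$. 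The loxodrome corresponds to $y'\equiv k$, and by \eqref{r5} its cost is $\Delta v\, g(k)$.

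The key step is a supporting-line (tangent-line) inequality for $g$ on $[0,\infty)$ at the point $k\ge1$. One computes $g''(p)=(6p^2-2)/(1+p^2)^3$, so $g$ is convex only for $p\ge 1/\sqrt3$, and convexity alone is not enough. Instead I will show directly that
$$g(p)\ \ge\ g(k)+g'(k)(p-k)\quad\text{for all } p\ge 0,$$
with equality only at $p=k$. Clearing denominators, the difference $g(p)-g(k)-g'(k)(p-k)$ equals
$$\frac{(p-k)^2\,(k^2+2kp-1)}{(1+p^2)(1+k^2)^2},$$
which is the excess function $\mathcal E$ from Theorem \ref{t42} with $f=1$ and the roles of $p,q$ renamed. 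For $k\ge1$ and $p\ge0$ the factor $k^2+2kp-1$ is at least $k^2-1\ge0$. It vanishes only when $k=1$ and $p=0$, and in that case the other factor $(p-k)^2$ equals $1$, so this needs a separate look. At $k=1$, $p=0$ the inequality reads $g(0)=1\ge g(1)+g'(1)(-1)=\tfrac12+\tfrac12=1$, which is an equality. So for $k=1$ the tangent line also touches $g$ at $p=0$, and uniqueness must then be handled separately.

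With the tangent inequality in hand, I integrate it with $p=y'(v)$ over $[v_0,v_1]$, using $\int y'=L=k\Delta v$:
$$\mathcal R[u]\ge \Delta v\, g(k)+g'(k)(L-k\Delta v)=\Delta v\, g(k)=\mathcal R[\gamma_{lox}].$$
This gives global minimality in $\mathcal{C}^m_{0,1}$. For uniqueness when $k>1$, equality forces $y'=k$ at every point of continuity of $y'$. Hence $y$ is affine, and $u$ is the loxodrome.

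The main obstacle is the borderline case $k=1$, i.e. $\Delta v=L$. Here equality is allowed whenever $y'\in\{0,1\}$ almost everywhere. The constraint $\int y'=L=\Delta v$ then forces $y'=1$ on a set of measure $\Delta v$, that is, on all of $[v_0,v_1]$, so $y'\equiv1$ again and uniqueness survives. I will spell out this measure argument, using the finitely many $C^1$ pieces, to close the proof. I will also note that it matches Theorem \ref{t51}, since the truncated loxodrome degenerates to the loxodrome when $V=v_1$.
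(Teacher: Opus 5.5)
Your proof is correct. It rests on the same mechanism as the paper's proof, namely a convexity bound at the mean slope $k=L/\Delta v\ge 1$, but you implement it differently.

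The paper builds the lower convex envelope $g^{**}$ of $g$ on $[0,\infty)$. This envelope equals $1-\tfrac12 p$ on $[0,1]$ and $g$ on $[1,\infty)$. The paper then chains $g\ge g^{**}$ with Jensen's inequality and uses $g^{**}(k)=g(k)$. You bypass the envelope and check directly that the tangent line of $g$ at $k$ supports $g$ on all of $[0,\infty)$. You do this via the factorization $(p-k)^2(k^2+2kp-1)/\bigl((1+p^2)(1+k^2)^2\bigr)$, which is the excess function of Theorem~\ref{t42} with $f=1$. Integrating that single inequality is exactly what Jensen does for $g^{**}$, so the two bounds coincide.

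Your version is more self-contained and more careful, in three ways.
\begin{itemize}
\item It does not lean on the paper's claim that $g$ is concave on $[0,1)$. The inflection point is actually at $1/\sqrt3$, although the paper's $g^{**}$ is still correct.
\item It shows exactly where $y'\ge 0$ enters: the factor $k^2+2kp-1$ becomes negative for sufficiently negative $p$.
\item Its equality analysis is complete where the paper's is only sketched. The paper asserts that equality in Jensen forces $y'$ to be constant a.e. This is not automatic, since $g^{**}$ is affine on $[0,1]$. Your treatment of $k=1$ supplies the missing step: there the tangent line also touches $g$ at $p=0$, and you use $\int y'=\Delta v$ together with the finitely many $C^1$ pieces to exclude any piece with $y'=0$.
\end{itemize}

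The paper's envelope has the advantage of handling Theorem~\ref{t62} in the same stroke. Your method covers that case too, though. At $k=1$ your inequality reads $g(p)\ge 1-\tfrac12 p$ for $p\ge0$, and integrating it gives the bound $\Delta v-\tfrac12 L$ directly.
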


\begin{proof}
We introduce the change of variables $y(v) = \Phi(u(v))$. Notice that the constraint $u' \ge 0$ is preserved as $y' \ge 0$. The resistance functional becomes 
\begin{equation}
\mathcal{R}[y] = \int_{v_0}^{v_1} \frac{1}{1 + y'^2}\, dv.
\end{equation}
The curve $\gamma_{lox}$ is, by definition, an extremal and it is immediate that $\gamma_{lox}$ connects $A$ to $B$ because $\Phi(u(v_0))=\Phi(u_0)$ and $\Phi(u(v_1))=k\Delta v+\Phi(u_0)=\Phi(u_1)$ by the definition of $k$. 

Let $g(p) = \frac{1}{1+p^2}$ be the integrand of $\mathcal{R}[y]$. To find the global minimum, we consider the lower convex envelope of $g(p)$ on the domain $p \in [0, \infty)$, denoted by $g^{**}(p)$. Since $g$ is concave on $[0,1)$ and convex on $(1,\infty)$, the function $g^{**}$ in the interval $[0,1]$ is the tangent line to $g$ from $(0, g(0)) = (0, 1)$ to $(1,g(1))=(1,\frac12)$; on $[1,\infty)$, we have $g^{**}=g$. This gives
\begin{equation}
g^{**}(p) = \begin{cases} 
1 - \frac{1}{2}p & \text{if } 0 \le p \le 1 \\
\frac{1}{1+p^2} & \text{if } p \geq 1 
\end{cases}
\end{equation}
Let $u\in \mathcal{C}_{0,1}^m$. Since $g\geq g^{**} $ and $g^{**}$ is convex, we can apply Jensen's inequality obtaining
\begin{equation}
 \begin{split}
\R[u]&= \int_{v_0}^{v_1} g(y')\, dv \ge \int_{v_0}^{v_1} g^{**}(y')\, dv \ge \Delta v\cdot g^{**} \left( \frac{1}{\Delta v} \int_{v_0}^{v_1} y'\, dv \right) \\
&= \Delta v\cdot g^{**} \left( \frac{1}{\Delta v} (\Phi(u_1)-\Phi(u_0)) \right)= \Delta v\cdot g^{**}(k)\\
&=\Delta v\cdot g(k),
\end{split}
\end{equation}
where in the last equality, we used the fact that $k\geq 1$ and the definition of $g^{**}$. Furthermore, the resistance of the loxodrome is given in \eqref{r5}, which yields $\R[\gamma_{lox}]=\frac{\Delta v}{1+k^2}=\Delta v\cdot g(k)$. Thus, $\R[u]\geq \R[\gamma_{lox}]$. 

To establish uniqueness, assume there is another curve $u=u(v)$ whose resistance is $\Delta v\cdot g(k)$. Then we have equality throughout the chain of inequalities. In particular, equality in Jensen's inequality holds if and only if the argument $y'(v)$ is constant almost everywhere. Thus $y'(v) = k$ identically and this proves that $u(v)$ is a loxodrome. By the boundary conditions at $v_0$ and $v_1$, $u(v)$ must be $\gamma_{lox}$. 
\end{proof}

We now address the complementary case where the angular domain is large, that is, 
$\Delta v > \Phi(u_1) - \Phi(u_0)$. Under  this assumption,  a smooth loxodrome connecting 
$A$ and $B$ would necessarily have a slope $k<1$, falling into the region where the integrand 
$g(p)$ is non-convex. As shown in  Theorem \ref{t42}, such an extremal fails the Weierstrass 
condition and cannot be a strong  local minimum. This suggests  the need to truncate the loxodrome  by fixing the slope to be $k=1$ and completing the path with a circular arc ($u'=0$). This leads to   the 
truncated loxodrome $\gamma_{trunc}^V$ defined in Section \ref{s4}, which we prove to be  a global minimizer.  

\begin{theorem}\label{t62}
Let $A=(u_0, v_0)$ and $B=(u_1, v_1)$ be two points with $u_0 < u_1$ and $v_0<v_1$. If the angular amplitude $\Delta v = v_1 - v_0$ satisfies 
\begin{equation}
\Delta v > \Phi(u_1) - \Phi(u_0),
\end{equation}
then the  minimum value of the resistance functional $\mathcal{R}$ in the class of admissible monotonic curves $\mathcal{C}_{0,1}^{m}$ is given by $\Delta v - \frac{1}{2}(\Phi(u_1) - \Phi(u_0))$. This minimum value is attained by the truncated loxodrome $\gamma_{trunc}^V$, with junction point $V = v_0 + \Phi(u_1) - \Phi(u_0)$.
\end{theorem}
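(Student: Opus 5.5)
The plan is to reuse the convexification argument from the proof of Theorem \ref{t61}, now in the regime where the mean slope falls in the region where $g$ and its convex envelope $g^{**}$ differ. Set $L=\Phi(u_1)-\Phi(u_0)$ and make the change of variables $y(v)=\Phi(u(v))$. For any $u\in\mathcal{C}_{0,1}^m$ this turns the resistance into $\R[u]=\int_{v_0}^{v_1} g(y')\,dv$, with $g(p)=\frac{1}{1+p^2}$, $y'\ge 0$ and $\int_{v_0}^{v_1}y'\,dv=L$.

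For the lower bound, use the envelope $g^{**}$ already computed in the proof of Theorem \ref{t61}. Since $g\ge g^{**}$ and $g^{**}$ is convex, Jensen's inequality gives
$$\R[u]\ge \int_{v_0}^{v_1} g^{**}(y')\,dv\ge \Delta v\cdot g^{**}\Big(\frac{L}{\Delta v}\Big).$$
The hypothesis $\Delta v>L$ means the mean slope $k=L/\Delta v$ lies in $(0,1)$, where $g^{**}(p)=1-\frac p2$. The bound therefore becomes $\Delta v\,(1-\frac{L}{2\Delta v})=\Delta v-\frac L2$.

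A simpler route avoids Jensen altogether: the pointwise inequality $g(p)\ge 1-\frac p2$ holds for all $p\ge0$. I would verify it directly, since
$$\frac{1}{1+p^2}-1+\frac p2=\frac{p(p-1)^2}{2(1+p^2)}\ge 0.$$
Integrating it gives $\R[u]\ge \Delta v-\frac12\int_{v_0}^{v_1} y'\,dv=\Delta v-\frac L2$ immediately. This identity is the key step, and I expect it to be the only real computation.

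For attainment, note that $\gamma_{trunc}^V$ with $V=v_0+L$ lies in $\mathcal{C}_{0,1}^m$. It is continuous and piecewise $C^1$, has $u'\ge0$, and satisfies $u(v_0)=u_0$ and $u(v_1)=u_1$; moreover $V<v_1$ precisely because $\Delta v>L$. Its resistance equals $\Delta v-\frac L2$ by \eqref{rv} of Theorem \ref{t51}. Equivalently, equality holds in the pointwise inequality, since $y'\in\{0,1\}$ almost everywhere for this curve. This proves the minimum value and its attainment.

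The remaining subtlety is the admissibility bookkeeping: the change of variables must be valid for piecewise $C^1$ monotone $u$, so that $y$ is piecewise $C^1$ with $\int y'=L$, which holds because $\Phi$ is smooth and increasing. As a side remark, not needed for the statement, the equality case $p(p-1)^2=0$ shows that every minimizer has $y'\in\{0,1\}$ almost everywhere. Consequently the minimizer is not unique: any arrangement of unit-slope loxodromic arcs and parallel arcs with total loxodromic length $L$ also attains the minimum, which is consistent with the theorem asserting attainment rather than uniqueness.
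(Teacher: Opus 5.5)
Your proof is correct, and your first argument is exactly the paper's. You substitute $y=\Phi(u)$, bound $g$ below by $g^{**}$, and apply Jensen with mean slope $k=L/\Delta v<1$, so that $g^{**}(k)=1-\frac k2$; attainment then follows from \eqref{rv}.

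Your ``simpler route'' is an elementary form of the same idea rather than a new one. Jensen for $g^{**}$ at a point of its linear piece amounts to integrating the supporting line $1-\frac p2$. Your identity $\frac{1}{1+p^2}-1+\frac p2=\frac{p(p-1)^2}{2(1+p^2)}$ checks directly that this line lies below $g$ on all of $[0,\infty)$.

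That direct check is worth having, because the envelope computation rests on exactly this fact. The paper justifies it by asserting that $g$ is concave on $[0,1)$, which is inaccurate. In fact $g''(p)=\frac{2(3p^2-1)}{(1+p^2)^3}$ changes sign at $p=1/\sqrt3$, the Legendre threshold of Proposition \ref{pr37}. The envelope is nonetheless correct, since the tangent to $g$ at $p=1$ is $1-\frac p2$ and passes through $(0,1)$.

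Your equality case $p(p-1)^2=0$ also characterizes all minimizers as those with $y'\in\{0,1\}$ almost everywhere. This makes the non-uniqueness remark after the theorem immediate.

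What the paper's formulation buys in return is uniformity. One Jensen argument with $g^{**}$ covers both Theorem \ref{t61} and Theorem \ref{t62}. Your pointwise approach would instead need a different supporting line when $k\ge1$ (the tangent to $g$ at $p=k$), with its own verification.
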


\begin{proof}
Let $L = \Phi(u_1) - \Phi(u_0)$. We use the change of variables $y(v) = \Phi(u(v))$ and the lower convex envelope $g^{**}(p)$ of the integrand $g(p) = \frac{1}{1+p^2}$ introduced in Theorem \ref{t61}. Since $\Delta v > L$, we have $k = \frac{L}{\Delta v} < 1$. 
Recall that in  $[0, 1]$, the convex envelope is given by the straight line $g^{**}(p) = 1 - \frac{1}{2}p$. For any admissible curve $u \in \mathcal{C}_{0,1}^m$, using $g\geq g^{**}$, Jensen's inequality yields
\begin{equation}
\begin{split}
\mathcal{R}[u] &= \int_{v_0}^{v_1} g(y')\,dv \ge \int_{v_0}^{v_1} g^{**}(y')\, dv \ge \Delta v \cdot g^{**}\left( \frac{1}{\Delta v} \int_{v_0}^{v_1} y'\, dv \right) \\
&= \Delta v \cdot g^{**}(k) = \Delta v \left( 1 - \frac{1}{2}k \right) = \Delta v - \frac{1}{2}L.
\end{split}
\end{equation}
This establishes an absolute lower bound for the resistance. Furthermore, we know from \eqref{rv} in Theorem \ref{t51}, that $
\mathcal{R}[\gamma_{trunc}^V] =  \Delta v - \frac{1}{2}L$. This shows that  $\gamma_{trunc}^V$  is a global minimizer in $\mathcal{C}_{0,1}^m$. 
\end{proof}

\begin{remark}
Unlike the case where $\Delta v \le \Phi(u_1) - \Phi(u_0)$, the global minimizer here is not uniquely defined in the class $\mathcal{C}_{0,1}^m$.   Any monotonic piecewise $C^1$-curve whose derivative $y'(v)$ alternates between $1$ and $0$, maintaining the total measure of the sets where $y'=1$ equal to $L$, will yield the same  resistance (there is a similar discussion of non-uniqueness in \cite{st} for the classical two-dimensional Newton problem). However, $\gamma_{trunc}^V$ is the natural physical choice because there is only one junction.
\end{remark}

The substitution $y(v) = \log u(v)$ reveals the following connection to  the classical two-dimensional Newton's problem in Cartesian coordinates under a parallel flow.    Under this transformation, loxodromes of the surfaces in geodesic coordinates map   to straight lines of $\r^2$ in  Cartesian coordinates. Furthermore, the critical slope   $k=1$   corresponds to  the   $45^\circ$ critical angle in Newton's problem  analyzed by Silva and Torres \cite{st}. While they established the absolute minimality of the constant-slope profile for $k > 1$ using optimal control theory and the Pontryagin maximum principle, we provide an alternative   proof based on the lower convex envelope and Jensen's inequality.  

\section*{Declarations }

The author has no conflicts to disclose. No data was used for the research described in the article.


 \section*{Acknowledgements}
Rafael L\'opez has been partially supported by MINECO/MICINN/FEDER grant no. PID2023-150727NB-I00, and by the ``Mar\'{\i}a de Maeztu'' Excellence Unit IMAG, reference CEX2020-001105- M, funded by MCINN/AEI/10.13039/ 501100011033/ CEX2020-001105-M.


\end{document}